\newcommand*{\Theorem}{Theorem}
\newcommand*{\Proposition}{Proposition}
\newcommand*{\Lemma}{Lemma}
\newcommand*{\Corollary}{Corollary}
\newcommand*{\Definition}{Definition}
\newcommand*{\Remark}{Remark}
\newcommand*{\Notation}{Notation}
\theoremstyle{plain}
\newtheorem{theorem}{\Theorem}
\newtheorem{lemma}[theorem]{\Lemma}
\theoremstyle{definition}
\newtheorem{definition}[theorem]{\Definition}
\theoremstyle{remark}
\newtheorem{remark}[theorem]{\Remark}
\renewcommand{\epsilon}{\varepsilon}
\newcommand{\U}{\mathcal{U}}
\newcommand{\Uidem}{\U_{\textup{\rm idem}}}
\newcommand{\Uss}{\U_{\textup{\rm ss}}}
\newcommand{\F}{\mathcal{F}}
\newcommand{\ps}[1]{\mathcal{P}(#1)}
\newcommand{\T}{\mathbf{T}}
\newcommand{\FS}[1]{{\mathrm{FS}(#1)}}
\newcommand{\FFS}[1]{{\mathcal{F}(#1)}}
\DeclareMathSymbol{\two}{\mathalpha}{letters}{`2}
\title{On idempotent ultrafilters in higher-order reverse mathematics}
\author{Alexander P.\@ Kreuzer}
\address{ENS Lyon, Université de Lyon, LIP (UMR 5668 -- CNRS -- ENS Lyon -- UCBL -- INRIA) \\
46~Allée d'Italie,
69364 Lyon Cedex 07, France
}
\email{alexander.kreuzer@ens-lyon.fr}
\urladdr{http://perso.ens-lyon.fr/alexander.kreuzer/}
\thanks{The author was supported by the Récré project and by the German Science Foundation (DFG Project KO 1737/5-1).}
\subjclass[2010]{03B30, 03F35, 03F60, 05D10}
\keywords{idempotent ultrafilter, Hindman's theorem, conservation, program extraction, functional interpretation, higher-order reverse mathematics}
\date{\today}
\begin{document}

\begin{abstract}
  We analyze the strength of the existence of idempotent ultrafilters in higher-order reverse mathematics.

  Let \lpf{\Uidem} be the statement that an idempotent ultrafilter on $\Nat$ exists.
  We show that over \ls{ACA_0^\omega}, the higher-order extension  of \ls{ACA_0}, the statement \lpf{\Uidem} implies the iterated Hindman's theorem (\lp{IHT}) and we show that $\ls{ACA_0^\omega}+\lpf{\Uidem}$ is $\Pi^1_2$-conservative over $\ls{ACA_0^\omega}+\lp{IHT}$ and thus over \ls{ACA_0^{+}}.
\end{abstract}

\maketitle

In \cite{aK12b} we developed a technique to extract programs of proofs using non-\hspace{0pt}principal ultrafilters. Along these lines, we also proved a conservativity result for the statement that a non-principal ultrafilter exists.

In this paper we apply this technique to idempotent ultrafilters. 
An idempotent ultrafilter $\U$ (over $\Nat$) is an ultrafilter such that $\U=\U+\U$ where the addition is given by
\[
\U + \mathcal{V} = \{\, X \subseteq \Nat \mid \{\,n\in \Nat \mid X-n \in \mathcal{V} \,\} \in \mathcal{U}\,\}
.\]
The set of all ultrafilters on $\Nat$ can be identified with the Stone-\v{C}ech compactification of $\Nat$.
One can show that the addition defined above is the extension of the addition of $\Nat$ to $\beta\Nat$. Together with it $\beta\Nat$ becomes a left topological compact semigroup. The existence of idempotent elements follows then from Ellis' Theorem. See \cite{HS12,vB10} for an overview. 

We will show that the existence of idempotent ultrafilters is $\Pi^1_2$\nobreakdash-\hspace{0pt}conservative over the iterated Hindman's theorem (\lp{IHT}) as defined by Blass, Hirst, Simpson in \cite{BHS87}, see also \cite{jH04}.

Let \lpf{\U}, \lpf{\Uidem} be the statements that a non-\hspace{0pt}principal resp.\@ idempotent non-\hspace{0pt}principal ultrafilter on $\Nat$ exists. Let \ls{RCA_0^\omega}, \ls{ACA_0^\omega} be the extensions of \ls{RCA_0} resp.\@ \ls{ACA_0} to higher-order arithmetic as introduced by Kohlenbach in \cite{uK05b}. In \ls{RCA_0^\omega} or \ls{ACA_0^\omega} the statements \lpf{\U}, \lpf{\Uidem} can be formalized using an object of type $\Nat^\Nat \longrightarrow \Nat$.

Further, let Feferman's $\mu$ be a functional of type $\Nat^\Nat\longrightarrow \Nat$ satisfying 
\[
f(\mu(f))= 0\quad\text{if}\quad \Exists{x} f(x)=0 ,
\]
and let \lpf{\mu} be the statement that such a functional exists. Clearly, \lpf{\mu} implies arithmetical comprehension. However, $\mu$ is not definable in \ls{ACA_0^\omega}.

In \cite{aK12b} we showed that
\begin{itemize}
\item $\ls{RCA_0^\omega} \vdash \lpf{\U} \IMPL \lpf{\mu}$ and that
\item $\ls{ACA_0^\omega}+\lpf{\mu}+\lpf{\U}$ is $\Pi^1_2$-conservative over \ls{ACA_0^\omega}. Moreover, we proved a program extraction result for this system.
\end{itemize}

The purpose of this paper is to analyze \lpf{\Uidem} in the same way. We obtain
\begin{itemize}
\item $\ls{RCA_0^\omega} \vdash \lpf{\Uidem} \IMPL \lpf{\mu} \AND \lp{IHT}$, see \prettyref{thm:uidemiht}, and
\item $\ls{ACA_0^\omega} + \lpf{\mu} +\lp{IHT} + \lpf{\Uidem}$ is $\Pi^1_2$-conservative over $\ls{ACA_0^\omega} + \lp{IHT}$, see \prettyref{thm:main}. We also obtain a program extraction result for this system.
\end{itemize}
Many theorems from combinatorics and Ergodic theory are established using idempotent ultrafilter, see for instance \cite{HS12,vB96,vB10,BTna}. Our result provides a method to eliminate the use of idempotent ultrafilters and to extract constructive content of such proofs.

Previously idempotent ultrafilters were considered in reverse mathematics by Hirst in \cite{jH04}. He considered countable approximations of idempotent ultrafilters, similar to those we will use below. He showed that the existence of these approximations already implies \lp{IHT}. However, his concept of downward translation invariant ultrafilter is too weak to interpret iterated uses of an idempotent ultrafilter even on countably many sets. (This is due to the fact that for a downward translation invariant ultrafilter $\U_\text{appr}$ he does not investigate the structure of the set $\{\, n\in \Nat \mid X-n \in \U_\text{appr} \,\}$ used in the addition on $\beta\Nat$.)

In an excursus (\prettyref{sec:ss}) we will show that we can use the technique developed in this paper also to eliminated the stronger statement that \emph{strongly summable} ultrafilter exists. The existence of strongly summable ultrafilters is beyond ZFC.

\section{Logical Systems}

We will work in fragments of Peano arithmetic in all finite types.
The set of all finite types $\T$ is defined to be the smallest set that satisfies 
\[
0\in \T, \qquad \rho,\tau\in \T \Rightarrow \tau(\rho)\in \T
.\]
The type $0$ denotes the type of natural numbers and the type $\tau(\rho)$ denotes the type of functions from $\rho$ to $\tau$. The type $0(0)$ is abbreviated by $1$ the type $0(0(0))$ by $2$. The degree of a type is defined by
\[
\textit{deg}(0):=0 \qquad \textit{deg}(\tau(\rho)) := \max(\textit{deg}(\tau),\textit{deg}(\rho)+1)
.\]
The type of a variable or term will sometimes be written as superscript.

Equality $=_0$ for type $0$ objects will be added as a primitive notion to the systems together with the usual equality axioms.
Higher type equality $=_{\tau\rho}$ will be treated as abbreviation:
\[
x^{\tau\rho}=_{\tau\rho} y^{\tau\rho} :\equiv \Forall{z^\rho} xz =_\tau yz
.\]

Define the $\lambda$-combinators $\Pi_{\rho,\sigma}, \Sigma_{\rho,\sigma,\tau}$ for $\rho,\sigma,\tau\in\T$ to be the functionals satisfying 
\[
\Pi_{\rho,\sigma} x^\rho y^\sigma =_\rho x , \qquad \Sigma_{\rho,\sigma,\tau} x^{\tau\sigma\rho} y^{\sigma\rho}z^{\rho} =_\tau xz(yz)
.\]
Similar define the recursor  $R_\rho$ of type $\rho$ to be the functional satisfying 
\[
R_\rho 0yz =_\rho y, \qquad R_\rho (Sx^0)yz =_\rho z(R_\rho xyz)x
.\]
Let \emph{G\"odel's system $T$} be the $\T$-sorted set of closed terms that can be build up from $0^0$, the successor function $S^1$, the $\lambda$-combinators and the recursors $R_\rho$ for all finite types $\rho$.
Using the $\lambda$-combinators one easily sees that $T$ is closed under $\lambda$-abstraction, see \cite{aT73}.
Denote by $T_0$ and $T_1$ the fragments of G\"odel's system $T$, where primitive recursion is restricted to recursors $R_0$ resp.\@ $R_0$ and $R_1$.
The system $T_0$ corresponds to the extension of Kleene's primitive recursive functionals to mixed types, see \cite{sK59}, whereas full system $T$ corresponds to G\"odel's primitive recursive functionals, see \cite{kG58}.
By $T_0[F]$ we will denote the system resulting from adding a function(al) $F$ to $T_0$.
See Kohlenbach \cite{uK08} for a general introduction and more background on these systems.

The system \ls{RCA_0^\omega} is defined to be the extension of the term system $T_0$ by $\Sigma^0_1$\nobreakdash-induction,
the extensionality axioms 
\[
(\lp{E_{\rho,\tau}})\colon\Forall{z^{\tau\rho},x^\rho,y^\rho} (x=_\rho y \IMPL zx =_\tau zy)
\]
for all $\tau,\rho\in \T$, and the schema of quantifier free choice restricted to choice of numbers over functions (\lp[QF]{AC^{1,0}}), i.e.
\[
\Forall{f^1}\Exists{x^0}\lf{A_\qf}(f,x) \IMPL \Exists{F^2}\Forall{f^1} \lf{A_\qf}(f,F(f))
.\]
This schema is the higher-order equivalent to recursive comprehension (\lp[\Delta^0_1]{CA}).
(Strictly speaking the system \ls{RCA_0^\omega} was defined in \cite{uK05b} to contain only quantifier free induction instead of $\Sigma^0_1$\nobreakdash-induction. Since $\Sigma^0_1$\nobreakdash-induction is provable in that system, we may also add it directly.)
The systems \ls{WKL_0^\omega}, \ls{ACA_0^\omega} are defined to be $\ls{RCA_0^\omega}+\lp{WKL}$ resp.\@ $\ls{RCA_0^\omega}+\ls[\Pi^0_1]{CA}$.

The system \ls{RCA_0^\omega} has a functional interpretation (always combined with the elimination of extensionality and a negative translation) in $T_0$. The system \ls{ACA_0^\omega} has a functional interpretation in $T_0[\mu]$, see \cite{uK05b,AF98,uK08}.

 All of these systems are conservative over their second-order counterparts, where the second-order part is given by functions instead of sets. These second-order systems can then be interpreted in \ls{RCA_0}, resp.\@ \ls{WKL_0}, \ls{ACA_0}. See \cite{uK05b}.

We will also use the following result by Hunter.
\begin{theorem}[{\cite[Theorem~2.5]{jH08}}]\label{thm:jh}
  The system $\ls{RCA_0^\omega} + \lpf{\mu}$ is conservative over \ls{ACA_0}.
\end{theorem}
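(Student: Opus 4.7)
My approach would be model-theoretic: I aim to show that every countable model of $\ls{ACA_0}$ can be expanded, without altering its first- or second-order parts, to a type structure satisfying $\ls{RCA_0^\omega} + \lpf{\mu}$. Since the language of $\ls{ACA_0}$ is second-order, this immediately yields the conservation claim. Starting with a countable $(M, \mathcal{S}) \models \ls{ACA_0}$, I would set $M_0 := M$, $M_1 := \mathcal{S}$, and define a candidate Feferman functional $\mu^*\colon M_1 \to M_0$ by letting $\mu^*(f)$ be the least $n \in M$ with $f(n) =_0 0$ when such $n$ exists (which is arithmetic in $\mathcal{S}$, hence well-defined in $\ls{ACA_0}$) and $0$ otherwise. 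For $\tau$ of higher degree I would let $M_\tau$ consist of all type-$\tau$ functionals denotable by a closed $T_0[\mu]$-term with parameters from $\bigcup_\sigma M_\sigma$, interpreting $\mu$ as $\mu^*$ and equality pointwise.

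The bulk of the verification is that this $\{M_\tau\}_{\tau \in \T}$ models $\ls{RCA_0^\omega} + \lpf{\mu}$. The $T_0$-combinators $\Pi_{\rho,\sigma}$, $\Sigma_{\rho,\sigma,\tau}$ and the recursor $R_0$ satisfy their defining equations pointwise by construction; extensionality $(\lp{E_{\rho,\tau}})$ and higher-type equality are immediate from the pointwise interpretation; $\Sigma^0_1$\nobreakdash-induction on $M_0$ lifts from $(M, \mathcal{S})$ once one observes that any $\Sigma^0_1$-matrix with higher-type parameters from $\{M_\tau\}$ reduces, via evaluation through $\mu^*$, to an arithmetic formula with $\mathcal{S}$-parameters (so that the relevant instance of induction is available in $\ls{ACA_0}$); and $\lpf{\mu}$ holds by construction.

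The main obstacle I expect is verifying the quantifier-free choice schema (\lp[QF]{AC^{1,0}}): given that $\Forall{f^1}\Exists{x^0}\lf{A_\qf}(f,x)$ holds in the structure, I must produce $F \in M_2$ witnessing the Skolemisation. The plan is to first rewrite $\lf{A_\qf}(f,x)$, using $\mu^*$ together with the primitive-recursive coding of bounded search available in $T_0$, as a numerical equation $t(f,x) =_0 0$ between $T_0[\mu]$-terms, and then to set $F := \lambda f^1.\, \mu^*\bigl(\lambda x^0.\, t(f,x)\bigr)$. This is itself a closed $T_0[\mu]$-term, hence lies in $M_2$, and the hypothesis ensures it witnesses the existential. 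Once this closure under Skolemisation is in place, the expansion satisfies $\ls{RCA_0^\omega} + \lpf{\mu}$ and agrees with $(M, \mathcal{S})$ on every second-order formula, whence the conservation claim.
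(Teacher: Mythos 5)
Your proposal is essentially correct and follows the same route as the source the paper relies on: the paper does not prove \prettyref{thm:jh} itself but cites Hunter's result, explicitly noting that it "was proven using a model construction", and your expansion of a countable model of \ls{ACA_0} to a type structure generated by $T_0[\mu]$-terms (with the key observations that type-$1$ denotations stay arithmetic in $\mathcal{S}$-parameters, so the second-order part is unchanged, and that \lp[QF]{AC^{1,0}} is witnessed by $\mu^*$-search) is exactly that kind of argument. The only points to tighten are cosmetic: define the higher-type domains by stages (parameters of type $\le 1$ suffice) rather than circularly over all $M_\sigma$, and record explicitly that $M_1$ is closed under the type-$1$ term denotations so that it really equals $\mathcal{S}$.
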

Note that this result was proven using a model construction and thus does not provide any method which would translate a proof of an analytic statement in $\ls{RCA_0^\omega} + \lpf{\mu}$ to a proof in \ls{ACA_0}. However, for $\Pi^1_2$-statements there is such a method, see \cite[Theorem~8.3.4]{AF98} and \cite{sF77}.

\begin{definition}[non-principal ultrafilter, \lpf{\U}]\label{def:ultra}
  Let \lpf{\U} be the statement that there exists a non-principal ultrafilter (on \Nat):
  \begin{equation}\label{eq:defultra}
  \lpf{\U}\colon\left\{
  \begin{aligned}
    \Exists{\U^2} \big(\ &\Forall{X^1} \left(X\in\U \OR \overline{X}\in\U\right) \\
    \AND\, &\Forall{X^1,Y^1} \left(X \cap Y \in \U \IMPL Y\in \U\right) \\
    \AND\, &\Forall{X^1,Y^1} \left(X,Y\in \U \IMPL (X\cap Y)\in\U\right)  \\
    \AND\, &\Forall{X^1} \left(X\in\U \IMPL \Forall{n}\Exists{k>n} (k\in X)\right) \\
    \AND\, &\Forall{X^1} \left(\U(X) =_0 \sg(\U(X)) =_0 \U(\lambda n. \sg(X(n)))\right)\big)
  \end{aligned}
  \right.
  \end{equation}
  Here $X\in \U$ is an abbreviation for $\U(X)=_0 0$. The type $1$ variables $X,Y$ are viewed as characteristic functions of sets, where $n\in X$ is defined to be $X(n) = 0$.
  The operation $\cap$ is defined as taking the pointwise maximum of the characteristic functions. With this, the intersection of two sets can be expressed in a quantifier free way.
  The last line of the definition states that $\U$ yields the same value for different characteristic functions of the same set and that $\U(X)\le 1$.

  For notational ease we will usually add a Skolem constant $\U$ and denote this also with \lpf{\U}.

  The second line in the definition of \lpf{\U} is equivalent to the following axiom usually found in the axiomatization of (ultra)filters:
  \[
  \Forall{X,Y} \left( X\subseteq Y \AND X\in\U \IMPL Y\in\U\right)
  .\]
  We avoided this statement in $\lpf{\U}$ since $\subseteq$ cannot be expressed in a quantifier free way.
\end{definition}

\begin{definition}[idempotent ultrafilter, \lpf{\Uidem}]
  An idempotent ultrafilter is a non-principal ultrafilter  $\U$ such that
  \begin{equation}\label{eq:idemex}
    \Forall{X^1} \left(X\in\U \IMPL \left\{\, n\in\Nat \mid X - n \in \U \,\right\} \in \U \right)
    .
  \end{equation}
  Let \lpf{\Uidem} be that statement that an idempotent ultrafilter exists, i.e.~\eqref{eq:defultra} where $\U$ is also required to satisfy \eqref{eq:idemex}.
\end{definition}

\section{Iterated Hindman's theorem}

Let $X$ be a finite or infinite set of natural numbers and $(x_i)_i$ be a strictly ascending enumeration of it.
We will write 
\[
\FS{X} = \FS{(x_i)_i} := \{\, x_{i_k} + \dots + x_{i_1} \mid i_1 < i_2 < \dots < i_k \,\}
\]
for the set of finite sums of $X$.

\begin{definition}[\cite{nH74}]
  \emph{Hindman's theorem} (\lp{HT}) is the statement that for each coloring $c\colon \Nat \longrightarrow 2$ of the natural numbers there exists an infinite set $X$ such that $\FS{X}$ is homogeneous for $c$. 
\end{definition}

Hindman's theorem is implied by \ls{ACA_0^+} that is \lp{ACA_0} plus the statement that for each set $X$ the $\omega$-Turing jump $X^{(\omega)}$ exists, and it implies \ls{ACA_0}, see \cite{BHS87}. 
It is open whether \lp{HT} is equivalent to \ls{ACA_0} or \ls{ACA_0^+} or whether it lies strictly between, see \cite{aM11}.

\begin{definition}[\cite{BHS87,jH04}]
  \emph{Iterated Hindman's theorem} (\lp{IHT}) is the statement that for each sequence of colorings $c_k\colon \Nat \longrightarrow 2$ there exists a strictly ascending sequence $(x_i)_{i\in\Nat}$ such that for each $k$ the set $\FS{(x_i)_{i=k}^\infty}$ is homogeneous for $c_k$.
\end{definition}

In \cite[Theorem~4.13]{BHS87} it was also shown that \lp{IHT} is provable in \ls{ACA_0^+}.

\begin{theorem}[\ls{RCA_0}]\label{thm:htps}\mbox{}
  \begin{enumerate}[label=(\roman*)]
  \item\label{enum:1:1} Hindman's theorem is equivalent to the statement that for each infinite set $Y$ and each coloring $c\colon \FS{Y} \longrightarrow 2$ there exists an infinite subset $X\subseteq \FS{Y}$ such that $\FS{X}$ is homogeneous for $c$.
  \item\label{enum:1:2} Similarly, iterated Hindman's theorem is equivalent to the statement that for each infinite set $Y$ and for each sequence of colorings $c_k\colon \FS{Y} \longrightarrow 2$ there exists a sequence $(x_i)_{i\in\Nat}\subseteq \FS{Y}$ which satisfies the conclusion of \lp{IHT}.
  \end{enumerate}
\end{theorem}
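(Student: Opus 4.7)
The plan is to observe that the reverse implications are immediate and to prove the forward ones via the standard ``binary encoding'' of an arbitrary $\FS{Y}$ onto $\Nat\setminus\{0\}$. The reverse direction in~(i) is witnessed by taking $Y = \Nat\setminus\{0\}$, so that $\FS{Y} = \Nat\setminus\{0\}$, and viewing any coloring of $\Nat$ as one of $\FS{Y}$; the infinite $X \subseteq \FS{Y}\subseteq\Nat$ it produces then witnesses \lp{HT}, and~(ii) is identical. For the forward direction, let $Y = (y_i)_{i\in\Nat}$ be enumerated in increasing order and define $\phi\colon \Nat\setminus\{0\} \to \FS{Y}$ by $\phi(n) := \sum_{i\in A(n)} y_i$, where $A(n)$ is the set of bit positions of $n$ in binary. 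This map is a bijection onto $\FS{Y}$ and a \emph{partial sum-homomorphism}: $\phi(m+n) = \phi(m) + \phi(n)$ whenever $A(m)\cap A(n) = \emptyset$.

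For~(i) I would pull back $c$ to $\hat c(n) := c(\phi(n))$, apply \lp{HT} to obtain infinite $W$ with $\FS{W}$ monochromatic for $\hat c$, and then recursively thin $W$ to a sequence $W' = (w'_i)$ of elements of $\FS{W}$ such that both the $W$-index sets $S_i$ used to build $w'_i$ and the binary supports $A(w'_i)$ are pairwise disjoint. The thinning step is a pigeonhole argument: if $k$ is the highest bit currently occupied, then among any $2^{k+1}+2$ consecutive partial sums of fresh $W$-elements, two agree modulo $2^{k+1}$, and their difference lies in $\FS{W}$, is divisible by $2^{k+1}$, and uses only fresh $W$-indices. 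Setting $X := \phi(W')\subseteq\FS{Y}$, the disjointness of the $A(w'_i)$ yields $\FS{X} = \phi(\FS{W'}) \subseteq \FS{Y}$, and since $\FS{W'}\subseteq\FS{W}$ is monochromatic for $\hat c$, $\FS{X}$ is monochromatic for $c$.

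For~(ii) the same scheme applies to the family $\hat c_k(n) := c_k(\phi(n))$: \lp{IHT} supplies $(w_i)$ with each tail $\FS{(w_i)_{i\ge k}}$ monochromatic for $\hat c_k$, and the analogous thinning is performed. Because each stage consumes at least one fresh $W$-index, the invariant $\min S_i \ge i$ holds automatically, so $\{w'_i : i\ge k\}\subseteq \FS{(w_j)_{j\ge k}}$ and the tail monochromaticity transfers to $x_i := \phi(w'_i)$. All these constructions are $\Delta^0_1$-recursive in the data provided by \lp{HT} respectively \lp{IHT}, so the whole argument goes through in \ls{RCA_0}. I expect the main obstacle to be the combined bookkeeping in the pigeonhole thinning --- simultaneously preserving disjoint binary supports, freshness of $W$-indices, and the $\min S_i \ge i$ invariant needed for~(ii) --- although each constraint is straightforward in isolation.
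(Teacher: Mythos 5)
Your argument is correct and is essentially the standard proof that the paper delegates to the citation of \cite[Lemma~2.1]{BH93} (binary-support encoding of $\FS{Y}$ plus the pigeonhole disjointification of supports inside $\FS{W}$), so you are taking the same route the paper relies on, just written out in full. Two small repairs: $\phi$ need not be injective (e.g.\ $y_0+y_1=y_2$ is possible), but only surjectivity onto $\FS{Y}$ and the partial homomorphism property are actually used, and the image set is still infinite since $\phi(w'_i)\ge y_{\min A(w'_i)}\to\infty$; and for~(ii) the conclusion of \lp{IHT} asks for a \emph{strictly ascending} sequence $(x_i)$, which your thinning does not automatically deliver, though it is obtained by additionally starting each fresh block of bits high enough that $y_{\min A(w'_{i+1})}>\phi(w'_i)$.
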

\begin{proof}
  \ref{enum:1:1} follows from Lemma~2.1 of \cite{BH93}, see also \cite{vB10}, and noting that the proof of the equivalences formalizes in \ls{RCA_0}.
  \ref{enum:1:2} follows by iterating the construction of \ref{enum:1:1}.
\end{proof}

\begin{theorem}\label{thm:uidemiht}
  $\ls{RCA_0^\omega} \vdash \lpf{\U_\textrm{idem}} \IMPL \lp{IHT}$
\end{theorem}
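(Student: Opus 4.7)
The plan is to adapt the Galvin--Glazer style proof of Hindman's theorem to the iterated setting, formalized in $\ls{RCA_0^\omega}$ with the aid of $\lpf{\Uidem}$ (which gives $\lpf{\U}$ and hence $\lpf{\mu}$ by the cited result of \cite{aK12b}).

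First, fix a sequence of colorings $(c_k)_{k\in\Nat}$ with $c_k\colon \Nat\to 2$. For each $k$, the ultrafilter $\U$ decides which color class is large: set $A_k := \{\,n \mid c_k(n) = i_k\,\}$ where $i_k$ is the unique $i\in\{0,1\}$ with $\{\,n\mid c_k(n)=i\,\}\in\U$. The characteristic function of $A_k$ is primitive recursive in $\U$, $c_k$, and $\mu$, and each $A_k\in\U$. Writing $F(\sigma) := \sum_{i\in\sigma} x_i$ for a chosen finite sequence $x_0 < x_1 < \dots$, the aim is to construct this sequence so that for every $k$ and every nonempty $\sigma\subseteq\{k,k+1,\dots\}$, $F(\sigma)\in A_k$; this makes $\FS{(x_i)_{i\ge k}}$ homogeneous for $c_k$.

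The core construction proceeds by primitive recursion, simultaneously producing $x_n$ and a ``pilot set'' $E_n\in\U$ satisfying the invariant
\[
E_n \subseteq \bigcap_{k\le n}\bigcap_{\tau\subseteq\{k,\dots,n-1\}}\bigl(A_k - F(\tau)\bigr).
\]
Set $E_0 := A_0$. Given $E_n\in\U$, use idempotency to form $E_n^* := E_n \cap \{\,m\mid E_n - m \in \U\,\}\in\U$; by non-principality and \lpf{\mu} pick $x_n\in E_n^*$ with $x_n>x_{n-1}$. Then define $E_{n+1} := A_{n+1}\cap E_n\cap (E_n-x_n)$, which lies in $\U$ as a finite intersection of three sets in $\U$ (the third by the choice $x_n\in E_n^*$). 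A short case analysis on whether $n\in\tau$ shows $E_{n+1}$ satisfies the invariant at stage $n+1$: for $\tau$ not containing $n$ it is inherited from $E_n$ (or from $E_{n+1}\subseteq A_{n+1}$ when $k=n+1$), and for $\tau=\{n\}\cup\tau'$ we use $E_{n+1}\subseteq E_n - x_n$ together with $E_n\subseteq A_k - F(\tau')$. As a consequence, at every stage and for every nonempty $\sigma\subseteq\{0,\dots,n\}$ one has $F(\sigma)\in A_{\min\sigma}\subseteq A_k$ whenever $k\le\min\sigma$, which yields the conclusion of \lp{IHT}.

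Regarding formalization: the functions $k\mapsto A_k$, $n\mapsto x_n$, $n\mapsto E_n$ are all definable in $T_0[\U,\mu]$, so the construction goes through in $\ls{RCA_0^\omega}+\lpf{\Uidem}$; verification of the invariant uses only $\Sigma^0_1$-induction on the finite index set, and the ultrafilter closure properties come directly from \prettyref{def:ultra}. The main conceptual point (and the only place idempotency is used) is the passage from $E_n$ to $E_n^*$: without this step one could not guarantee $E_n - x_n\in\U$ and the recursion would break down. The main bookkeeping obstacle, somewhat tedious but routine, is verifying that the three-term intersection defining $E_{n+1}$ actually entails the exponentially many inclusions in the invariant, for which the case split on $n\in\tau$ above is the cleanest route.
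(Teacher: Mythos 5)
Your proof is correct and is essentially the same Galvin--Glazer recursion the paper uses: there, the invariant is $\FS{(x_j)_{j=i}^k}\subseteq A_i^\star$ with $A_i^\star=\{\,n\in A_i\mid A_i-n\in\U\,\}$, and the next element is drawn from the finite intersection $\bigcap_{i\le k+1}\bigcap_{n\in\FS{(x_j)_{j=i}^k}\cup\{0\}}(A_i^\star-n)$, which lies in $\U$ by \prettyref{lem:xstar}. Your single nested chain of pilot sets $E_n$ packages the same translated intersections incrementally, with the pleasant side effect that you never need the analogue of \prettyref{lem:xstar} (that $X^\star-n\in\U$ for $n\in X^\star$): the only appeal to idempotency is $E_n^*\in\U$, and $E_n-x_n\in\U$ is then immediate from $x_n\in E_n^*$. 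One slip in your last sentence: the inclusion $A_{\min\sigma}\subseteq A_k$ is false in general, since the colour classes of distinct colourings are unrelated. It is also unnecessary: your invariant is already quantified over all $k\le n$ and all $\tau\subseteq\{k,\dots,n-1\}$, so for nonempty $\sigma\subseteq\{k,k+1,\dots\}$ with $\max\sigma=n$ you get $x_n\in E_n^*\subseteq E_n\subseteq A_k-F(\sigma\setminus\{n\})$ directly, hence $F(\sigma)\in A_k$; with that correction the argument is complete.
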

To prove this theorem we will use the following notation and lemma.
For an $X\subseteq \Nat$ we set $X^\star:= \{\, n \in X \mid X-n\in \U \,\}$. It is easy to see that if $\U$ is idempotent we have that $X\in \U$ implies that $X^\star\in \U$.
\begin{lemma}[$\ls{RCA_0^\omega}+\lpf{\Uidem}$, {\cite[Lemma~4.14]{HS12}}]\label{lem:xstar}
  Let $\U$ be an idempotent ultrafilter.
  For each $X\in \U$ and each $n\in X^\star\in \U$ also $X^\star-n\in \U$.
\end{lemma}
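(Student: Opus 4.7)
The plan is to reduce the lemma to the observation, noted just before the statement, that for an idempotent ultrafilter one has $Y \in \U \IMPL Y^\star \in \U$. The key will be the identity $(X - n)^\star = X^\star - n$, valid for any $n$; once this is in place, the hypothesis $n \in X^\star$ (which gives $X - n \in \U$) combined with the observation applied to $Y := X - n$ yields $(X - n)^\star \in \U$, and hence $X^\star - n \in \U$.

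The main step is therefore to verify the identity. Here I would use the semigroup convention $Y - k := \{\, j \mid j + k \in Y \,\}$, under which $(Y - k) - \ell = Y - (k + \ell)$ holds for all $k, \ell$. Unfolding $(X - n)^\star$ with $Y = X - n$ gives
\[
(X - n)^\star = \{\, m \in X - n \mid (X - n) - m \in \U \,\} = \{\, m \mid m + n \in X \AND X - (m + n) \in \U \,\},
\]
which is precisely $\{\, m \mid m + n \in X^\star \,\} = X^\star - n$. Thus proving the identity is a direct unfolding of definitions, the only care needed being to keep the additive convention straight.

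There is no real obstacle beyond this bookkeeping: the preliminary observation itself is just the fact that $X^\star = X \cap \{\, n \mid X - n \in \U \,\}$ is the intersection of two sets in $\U$ (by $X \in \U$ and by idempotency of $\U$), so closure of $\U$ under finite intersections finishes it. All of the manipulations are quantifier-free once the Skolem constant $\U$ is in hand, so the argument formalizes directly in $\ls{RCA_0^\omega} + \lpf{\Uidem}$.
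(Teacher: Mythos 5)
Your proposal is correct and follows essentially the same route as the paper: the paper sets $Y:=X-n$, notes $Y^\star\in\U$, and proves the inclusion $Y^\star\subseteq X^\star-n$ by exactly the unfolding you carry out (your version upgrades this to the equality $(X-n)^\star=X^\star-n$, which is true but only the one inclusion is needed). The bookkeeping with the convention $Y-k=\{\,j\mid j+k\in Y\,\}$ matches the paper's, so there is nothing to fix.
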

\begin{proof}
  Let $Y:= X-n$. By assumption $Y\in \U$ and therefor also $Y^\star\in \U$. 
  We claim that $Y^\star \subseteq X^\star -n$.
  To see this, let $m$ be an arbitrary element of $Y^\star$. Then $m\in Y$ and so $m+n\in X$. Also $Y-m\in \U$, therefore $X-(m+n)\in \U$. From this follows that $m+n\in X^\star$ and with this the claim.
  Since $Y^\star$ is in $\U$, the claim implies that $X^\star - n\in \U$.
\end{proof}

\begin{proof}[Proof of \prettyref{thm:uidemiht}]
  Let $\U$ be an idempotent ultrafilter and let $c_i$ be a sequence of colorings. 
  Set $A_i$ to be a set such that $c_i$ is constant on $A_i$ and $A_i\in \U$. For instance one may take either $c_i^{-1}(0)$ or $c_i^{-1}(1)$.

  We will recursively build a sequence $(x_j)_{j\in \Nat}$ such that $\FS{(x_j)_{j=i}^\infty} \subseteq  A_i^\star$ for all $i$.
  Assume that we have chosen $(x_j)_{j=0}^k$ such that $\FS{(x_j)_{j=i}^k} \in A_i^\star$ for each $i$.
  Let $B_i:= \bigcap_{n\in\FS{(x_j)_{j=i}^k} \cup \{0\}} A_i^\star - n$. This is a finite intersection of---by \prettyref{lem:xstar}---sets in $\U$. Thus, $B_i\in \U$ and in particular $\bigcap_{i \le k+1} B_i$ is in $\U$ and therefore not empty. Let $x_{k+1}$ be an element of this set.
  Then $x_{k+1} + \FS{(x_j)_{j=i}^k}  \subseteq  A_i^\star$. Hence, $\FS{(x_j)_{j=i}^{k+1}}  \subseteq  A_i^\star$ for each $i$.
\end{proof}

The main results of this paper are the following theorems.
\begin{theorem}\label{thm:main}
  The system $\ls{ACA_0^\omega} + \lpf{\mu} + \lp{IHT} + \lpf{\Uidem}$ is $\Pi^1_2$\nobreakdash-conservative over $\ls{ACA_0^\omega} + \lp{IHT}$ and thus in particular over $\ls{ACA_0}+\lp{IHT}$ and \lp{ACA_0^+}.
\end{theorem}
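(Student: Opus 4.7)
The plan is to adapt the Dialectica-based elimination of non-principal ultrafilters from \cite{aK12b} to the idempotent case. Assume $\ls{ACA_0^\omega} + \lpf{\mu} + \lp{IHT} + \lpf{\Uidem} \vdash \Forall{f^1}\Exists{g^1} A_0(f,g)$ with $A_0$ quantifier-free. After negative translation and Gödel's functional interpretation one extracts a closed term $t$, built over $T_0$ from $\mu$, a Dialectica realizer $\Phi_{\textup{IHT}}$ for \lp{IHT}, and the Skolem constant $\U$, such that $A_0(f, t[f])$ holds whenever these constants satisfy their defining axioms. The essential feature is that for any fixed $f$ the evaluation of $t[f]$ terminates and queries $\U$ only at finitely many, dynamically-generated sets $X_1,\ldots,X_m$. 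It therefore suffices to show, inside $\ls{ACA_0^\omega} + \lp{IHT}$, that these queries can be answered consistently with the idempotent-ultrafilter axioms.

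The core construction is an on-the-fly assembly of a partial idempotent ultrafilter. I maintain a triple $(Y, S_+, S_-)$ with $Y \subseteq \Nat$ an infinite witness set and $S_+, S_-$ finite families of already-decided sets, subject to the invariants $\FS{Y} \subseteq X$ for each $X \in S_+$ and $\FS{Y} \cap X = \emptyset$ for each $X \in S_-$. When a fresh query $\U(X)$ is issued, I apply the relativized form of Hindman's theorem (\prettyref{thm:htps}\ref{enum:1:1}, available from \lp{IHT}) inside $\FS{Y}$ to the coloring $\chi_X$, obtaining a strictly ascending $(y'_i) \subseteq \FS{Y}$ with $\FS{(y'_i)_i}$ monochromatic for $\chi_X$; $X$ is placed in $S_+$ or $S_-$ accordingly and $Y$ is refined to $(y'_i)_i$. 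Since $\FS{(y'_i)_i} \subseteq \FS{Y}$, the previous invariants, and hence all previous commitments, are preserved.

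The main obstacle is respecting the idempotence axiom \eqref{eq:idemex}. A commitment $X \in S_+$ forces not only $X - n \in \U$ for each $n \in \FS{Y}$, but also $Z_X := \{\, n \mid X - n \in \U \,\} \in \U$; either may in turn be queried by $t$ itself or by the Dialectica realizer of the idempotence axiom. The FS-structure of $Y$ does the work here: once $\FS{Y} \subseteq X$ and $n = y_{i_1} + \dots + y_{i_k} \in \FS{Y}$, then $X - n \supseteq \FS{(y_j)_{j > i_k}}$, and the latter remains a subset of $X - n$ under every subsequent monochromatic shrinking of $Y$; this mirrors the argument of \prettyref{lem:xstar}. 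Hence commitments of the form $X - n \in \U$ and $Z_X \in \U$ are automatically compatible with the evolving $Y$. The delicate bookkeeping---showing that the bounded total of direct and axiomatically induced queries can always be serviced by a finite sequence of \lp{IHT}-refinements of $Y$, and that each refinement can be chosen uniformly in $\ls{ACA_0^\omega} + \lp{IHT}$---is the technical heart of the argument.

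To conclude, $\lpf{\mu}$ is eliminated from $\Pi^1_2$-consequences by \cite[Theorem~8.3.4]{AF98} together with \prettyref{thm:jh}, giving $\Pi^1_2$-conservativity over $\ls{ACA_0^\omega} + \lp{IHT}$. The descent to $\ls{ACA_0} + \lp{IHT}$ is the standard conservativity of $\ls{ACA_0^\omega}$ over $\ls{ACA_0}$, and the consequence for $\lp{ACA_0^+}$ follows from $\lp{ACA_0^+} \vdash \lp{IHT}$ by \cite{BHS87}.
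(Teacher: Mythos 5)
Your core idea coincides with the paper's: approximate the idempotent ultrafilter by a filter of the form $\FFS{(y_i)_i}$ generated by the finite sums of a sequence, refine that sequence by Hindman-type applications to decide each queried set, and observe that the FS-structure makes the idempotence axiom \eqref{eq:idemex} hold automatically --- your computation ``$X-n \supseteq \FS{(y_j)_{j>i_k}}$'' is exactly the proof of \prettyref{lem:dtuup}. Two points where your sketch diverges are, however, genuine problems. First, you extract ``a Dialectica realizer $\Phi_{\textup{IHT}}$ for \lp{IHT}''; no faithful functional interpretation of \lp{IHT} is known (the paper says so explicitly in the discussion after \prettyref{thm:pe}), and realizing it by brute force via $R_1$ and $\mu$ would only yield conservation over \ls{ACA_0^+}, not over $\ls{ACA_0^\omega}+\lp{IHT}$. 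The paper sidesteps this entirely: by \prettyref{thm:uidemiht}, $\lpf{\Uidem}$ already implies \lp{IHT} (and $\lpf{\mu}$, \ls{ACA_0}), so the hypothesis \lp{IHT} is absorbed into $\lpf{\Uidem}$ \emph{before} the functional interpretation is applied, only $\U$ and its Skolem functional $K$ ever need to be realized, and \lp{IHT} reappears solely as an axiom of the verifying theory, where it drives the refinements.

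Second, the ``delicate bookkeeping'' you defer is where the real work lies, and single applications of Hindman's theorem per query do not suffice. After term normalization (\prettyref{thm:cut2}) every occurrence of $\U$ has the form $\U(t_i[j^0])$ with a free number variable $j$, so each of the finitely many syntactic occurrences corresponds to a \emph{countable} family of sets for which the quantifier-free matrix of $\lpf{\Uidem}$ must be verified simultaneously; moreover each decided set spawns the countably many translates $X-n$ and the set $\left\{\, n \mid X-n\in\F \,\right\}$. The paper therefore closes each stage under downward translation to obtain a countable downward translation algebra and applies the full \emph{iterated} Hindman's theorem once per stage (\prettyref{thm:dtex}), with \prettyref{lem:ss} guaranteeing that later refinements do not disturb earlier commitments; the finitely many stages are indexed by the normalized terms in subterm order, not by a dynamic evaluation trace. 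Your on-the-fly scheme would additionally have to show, inside the verifying theory, that evaluating $t[f]$ against the evolving filter stabilizes --- precisely what the static, syntax-driven construction avoids. Your concluding reduction (Hunter's theorem plus conservativity of \ls{ACA_0^\omega} over \ls{ACA_0} and $\ls{ACA_0^+}\vdash\lp{IHT}$) matches the paper.
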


\begin{theorem}[program extraction]\label{thm:pe}
  Let $\Forall{f}\Exists{g}\lf{A}(f,g)$ be a $\Pi^1_2$-sentence. If
  \begin{equation}\label{eq:pe}
  \ls{ACA_0^\omega} + \lpf{\mu} + \lp{IHT} + \lpf{\U_\textrm{idem}} \vdash \Forall{f} \Exists{g} \lf{A}(f,g)
  \end{equation}
  then one can extract from a proof a term $t\in T_1[\mu]$ such that
  \[
  \Forall{f} \Exists{g} \lf{A}(f,tf)
  .\]
\end{theorem}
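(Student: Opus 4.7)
The plan is to prove Theorem~\ref{thm:pe} as a term-level refinement of Theorem~\ref{thm:main}: I would apply a negative translation combined with Gödel's Dialectica functional interpretation to the derivation witnessing~(\ref{eq:pe}), and read off the witnessing term from the soundness of that interpretation. This is the standard route from Dialectica-based conservativity results to program extraction, as employed e.g.\ in~\cite{uK08}.

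Concretely, first apply a (Kuroda-style) negative translation to the statement and to the derivation. Next, apply the Dialectica interpretation: the theory $\ls{ACA_0^\omega}+\lpf{\mu}$ has a functional interpretation in $T_0[\mu]$ by \cite{uK05b,uK08}, which on a $\Pi^1_2$-statement $\Forall{f}\Exists{g}\lf{A}(f,g)$ directly produces a type-$1$ witnessing term. The axiom \lp{IHT} is interpreted via a term involving $R_1$, namely the primitive recursion at type~$1$ that iterates the construction of the FS-tower $(x_i)_i$ from the coloring sequence; this pushes the target system up to $T_1[\mu]$. Finally, \lpf{\Uidem} has to be eliminated by adapting the elimination of \lpf{\U} from \cite{aK12b} to the idempotent case. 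The underlying idea is that the Dialectica interpretation queries the ultrafilter $\U$ only on finitely many sets at each stage, and these queries can be answered coherently by declaring membership in $\U$ to be "contains a tail of a fixed Hindman tower produced by \lp{IHT}"; \prettyref{lem:xstar} ensures that this answering scheme respects the idempotency axiom~(\ref{eq:idemex}).

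The main obstacle lies in this last step, the elimination of \lpf{\Uidem}. While \cite{aK12b} handles the non-principal case by consistently answering finitely many queries, the idempotency axiom imposes an extra \emph{nested} condition $\{\,n\mid X-n\in\U\,\}\in\U$ which apparently requires answering infinitely many shifted queries simultaneously. The key insight is that the FS-tower structure gives a uniformly coherent scheme: if a query-set $X$ contains a tail $\FS{(x_i)_{i\ge k}}$ of a fixed Hindman tower, then so does $X^\star$, and so does $X^\star-x_j$ for every $j\ge k$ by \prettyref{lem:xstar}, so both~(\ref{eq:defultra}) and~(\ref{eq:idemex}) are satisfied on the finitely many sets actually examined by the proof. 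The Hindman tower is built by $R_1$-recursion with \lp{IHT} as an oracle, and Feferman's $\mu$ handles the arithmetical membership decisions inside $X^\star$; combined with the Dialectica interpretation of the base theory this keeps the extracted witness $t$ in $T_1[\mu]$, as required.
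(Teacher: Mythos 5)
Your overall architecture---negative translation plus functional interpretation, elimination of $\lpf{\Uidem}$ by answering the finitely many (normalized) ultrafilter queries with a filter of the form $\FFS{(x_i)_i}$ obtained from \lp{IHT}, with the FS-structure automatically validating the nested idempotency condition---is the route the paper takes; its proof of \prettyref{thm:pe} is a one-paragraph addendum to the proof of \prettyref{thm:main}, which already delivers \eqref{eq:2}. (A small misattribution: the fact that $\FFS{(x_i)_i}$ satisfies the idempotency axiom \eqref{eq:dta} is \prettyref{lem:dtuup}, not \prettyref{lem:xstar}; the latter concerns a genuine idempotent ultrafilter and is used in the opposite direction, for \prettyref{thm:uidemiht}.)

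The genuine gap is in your account of how \lp{IHT} itself gets witnessed by a term of $T_1[\mu]$. You describe the realizer as ``the primitive recursion at type $1$ that iterates the construction of the FS-tower $(x_i)_i$ from the coloring sequence,'' and later speak of building the tower ``by $R_1$-recursion with \lp{IHT} as an oracle.'' Neither works as stated: a direct $R_1$-recursion producing a Hindman tower from a coloring is not an available construction (that is essentially the Galvin--Glazer argument, which needs the very idempotent ultrafilter you are eliminating; no proof of \lp{HT} below \ls{ACA_0^+} is known), and leaving \lp{IHT} as an oracle does not yield a closed term of $T_1[\mu]$. The paper closes this gap differently: the functional $X \mapsto X^{(\omega)}$ is definable in $T_1[\mu]$ by iterating the $\mu$-definable Turing jump with $R_1$, hence the functional interpretation of \ls{ACA_0^+} is solvable in $T_1[\mu]$; since $\ls{ACA_0^+} \vdash \lp{IHT}$, this also solves the interpretation of \lp{IHT} and thus of \eqref{eq:2}. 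This detour through \ls{ACA_0^+} is precisely why the paper flags the extraction as not faithful. Your argument needs this step (or some other explicit $T_1[\mu]$-realizer for \lp{IHT}) to go through.
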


\prettyref{thm:main} is optimal in the sense that \lpf{\U_\textrm{idem}} cannot be $\Pi^1_2$-conservative over any system not containing \lp{IHT} because of \prettyref{thm:uidemiht}. The program extraction of \prettyref{thm:pe} is not faithful. For type reason \lpf{\U_\textrm{idem}} does not imply the totality of $R_1$. However, we do not have a faithful functional interpretation for \lp{IHT} yet and just use $R_1$ and $\mu$ to emulate \ls{ACA_0^+}.

The strategy of the proofs of these theorems is similar to the strategy in  \cite{aK12b}.
We will proceed roughly in the following steps.

\begin{enumerate}
\item 
  Let $\Forall{f} \Exists{g} \lf{A}(f,g)$ be a $\Pi^1_2$-statement
  such that \[
  \ls{ACA_0^\omega} + \lpf{\mu} + \lp{IHT} + \lpf{\U_\textrm{idem}} \vdash \Forall{f} \Exists{g} \lf{A}(f,g)
  .\]
  Using the functional interpretation and a program normalization we show that each application of $\U$ in this proof is of the form $\U(t[n^0])$ for a term $t$ that contains only $n$ free and with $\lambda n .t \in T_0[\U]$. (This step does not differ from the first step in \cite{aK12b}.)
\item We now construct provably in $\ls{RCA_0^\omega} + \lpf{\mu} + \lp{IHT}$ a---so called---downward translation partial ultrafilter, which acts like an idempotent ultrafilter on the sets that occur in the proof. The idempotent ultrafilter is then replaced by this object in the proof.
\item Applying \prettyref{thm:jh} to this yields \prettyref{thm:main}.
\item For \prettyref{thm:pe} we notice that in $T_1[\mu]$ we can define a functional solving \lp{IHT} and thus we can explicitly describe the downward translation partial ultrafilter we construction in the second step. With this we get a program witnessing $g$. 
\end{enumerate}

\section{Downward translation partial ultrafilter}

In \cite{aK12b} we built a---so called---partial non-principal ultrafilter which acted on the algebra of sets that were used in a proof like a non-principal ultrafilter. We will now briefly recall the notions of algebra and partial non-principal ultrafilter. After this, we will introduce the notions of \emph{downward translation algebra} and \emph{downward translation partial ultrafilter} which will be suitable for handling idempotent ultrafilters.
\begin{definition}\mbox{}
  \begin{itemize}
  \item   An \emph{algebra} is a set $\mathcal{A} \subseteq \ps{\Nat}$ that is closed under complement, finite unions, and finite intersection.
  \item For an algebra $\mathcal{A}$ we call a set $\F$ a \emph{partial non-principal ultrafilter} for $\mathcal{A}$ if $\F$ satisfies the axioms for a non-principal ultrafilter relativized to $\mathcal{A}$, i.e.
    \begin{equation}\label{eq:pnu}
      \left\{
        \begin{aligned}
          &\Forall{X\in \mathcal{A}} \left(X\in\F \OR \overline{X}\in\F\right) \\
          \AND\, &\Forall{X,Y\in \mathcal{A}} \left(X \cap Y \in \F \IMPL Y\in \F\right) \\
          \AND\, &\Forall{X,Y\in \mathcal{A}} \left(X,Y\in \F \IMPL (X\cap Y)\in\F\right)  \\
          \AND\, &\Forall{X\in\mathcal{A}} \left(X\in\F \IMPL \Forall{n}\Exists{k>n} k\in X\right) \\
          \AND\, &\Forall{X^1} \left(\F(X) =_0 \sg(\F(X)) =_0 \F(\lambda n . \sg(X(n))\right).
        \end{aligned}
      \right.
    \end{equation}
    (Note that we do not require $\F$ to be a subset of $\mathcal{A}$ as we did in \cite{aK12b}. This restriction was actually not used in \cite{aK12b} and could have been omitted.)
  \end{itemize}
\end{definition}

\begin{definition}\mbox{}
  \begin{itemize}
  \item An algebra $\mathcal{A}$ is called \emph{downward translation algebra} if it is closed under downward translations, i.e.
    \[
    X\in\mathcal{A} \Rightarrow  \Forall{n\in\Nat} \left( X-n \in \mathcal{A}\right)
    .\]
  \item A \emph{downward translation partial ultrafilter} is a partial non-principal ultrafilter $\F$ for a downward translation algebra $\mathcal{A}$ which in addition to \eqref{eq:pnu} satisfies the following axiom
    \begin{equation}\label{eq:dta}
      \Forall{X\in \mathcal{F}} \left(\left\{ n \in \Nat \sizeMid  X-n\in \F \right\} \in \F\right)
      .
    \end{equation}
    In other words, a downward translation partial ultrafilter for $\mathcal{A}$ is an object which satisfies the axioms \lpf{\Uidem} but where $X,Y$ is restricted to $\mathcal{A}$.
  \end{itemize}
\end{definition}

Like in \cite{aK12b} we will mostly work with countable downward translation algebras $\mathcal{A}$ which are given by a sequence of sets $(A_i)_{i\in\Nat}$. The characteristic function of $\chi_\mathcal{A}$ of $\mathcal{A}$ is then given by 
\[
\chi_{\mathcal{A}}(X)=
\begin{cases}
  0 & \text{if\, $\Exists{i} \left(A_i=X\right)$,} \\
  1 & \text{otherwise.}
\end{cases}
\]
Such a characteristic function can be defined using $\mu$.

It is easy to see that in \ls{RCA_0^\omega} each sequence of sets $(A_i)_{i\in\Nat}$ can be extended to form a countable downward translation algebra.

The downward translation partial ultrafilters we will build will be of the following form
\[
\FFS{(x_i)_i} := \left\{\, X \subseteq \Nat \sizeMid \Exists{m} \FS{(x_i)_{i=m}^\infty} \subseteq X \,\right\} 
\]
where $(x_i)_{i\in\Nat}$ is a strictly ascending sequence of natural numbers.

One checks that $\FFS{(x_i)_i}$ is closed under finite intersections, taking supersets, and contains only infinite sets. Thus, it is a filter.

\begin{lemma}\label{lem:dtuup}
  Let $(x_i)_{i\in\Nat}$ be an ascending sequence of natural numbers.
  Then $\FFS{(x_i)_i}$ satisfies \eqref{eq:dta}.

  In particular, if $\FFS{(x_i)_i}$ is a partial non-principal ultrafilter for a downward translation algebra $\mathcal{A}$ then it is already a downward translation partial ultrafilter for $\mathcal{A}$.
\end{lemma}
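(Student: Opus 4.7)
The plan is to verify \eqref{eq:dta} for $\FFS{(x_i)_i}$ directly from its definition, using only the fact that $(x_i)_i$ is strictly ascending. First I would fix $X \in \FFS{(x_i)_i}$ and choose $m$ with $\FS{(x_i)_{i=m}^\infty} \subseteq X$, and then try to show that the set $S := \{\, n \mid X - n \in \FFS{(x_i)_i} \,\}$ itself contains $\FS{(x_i)_{i=m}^\infty}$; this immediately puts $S$ in $\FFS{(x_i)_i}$ and hence delivers \eqref{eq:dta}.

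The substantive step is the following routine shift calculation. Take $n \in \FS{(x_i)_{i=m}^\infty}$, write $n = x_{i_1} + \cdots + x_{i_k}$ with $m \le i_1 < \cdots < i_k$, and propose the tail $\FS{(x_j)_{j=i_k+1}^\infty}$ as the witness that $X - n \in \FFS{(x_i)_i}$. For any $y = x_{j_1} + \cdots + x_{j_\ell}$ in that tail, the concatenated index sequence $i_1, \dots, i_k, j_1, \dots, j_\ell$ is still strictly ascending, so $n + y$ belongs to $\FS{(x_i)_{i=m}^\infty} \subseteq X$, hence $y \in X - n$. The strict ascendingness of $(x_i)_i$ is precisely what makes the concatenation of indices legal, and is the only place where the hypothesis on $(x_i)_i$ is used.

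The ``in particular'' clause is then immediate: once \eqref{eq:dta} has been established in general for $\FFS{(x_i)_i}$, adding the assumption that $\FFS{(x_i)_i}$ satisfies the partial non-principal ultrafilter axioms \eqref{eq:pnu} for a downward translation algebra $\mathcal{A}$ matches the definition of downward translation partial ultrafilter verbatim. I do not foresee any real obstacle: all quantifiers in the shift calculation are bounded once $m$ and the representation of $n$ have been fixed, so the argument formalizes without difficulty in $\ls{RCA_0^\omega}$.
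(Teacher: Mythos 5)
Your proof is correct and follows essentially the same route as the paper: fix $m$ with $\FS{(x_i)_{i=m}^\infty}\subseteq X$, show that this tail is contained in $\{\,n \mid X-n\in\FFS{(x_i)_i}\,\}$ by exhibiting for each $n=x_{i_1}+\dots+x_{i_k}$ the witness tail $\FS{(x_j)_{j=i_k+1}^\infty}\subseteq X-n$. The paper's proof is the same shift calculation with the same choice of cut-off index.
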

\begin{proof}
  Let $X\in \FFS{(x_i)_i}$. By definition there is an $m$, such that $\FS{(x_i)_{i=m}^\infty} \subseteq X$.

  It is sufficient to show that $\FS{(x_i)_{i=m}^\infty} \subseteq \left\{ n \in \Nat \sizeMid  X-n\in \FFS{(x_i)_i} \right\}$ or in other words that
  for each $n\in \FS{(x_i)_{i=m}^\infty}$ we have $X-n\in \FFS{(x_i)_i}$.

  Indeed each $n\in \FS{(x_i)_{i=m}^\infty}$ can be written as $n= x_{i_k}+x_{i_{k-1}}+\dots+x_{i_1}$ for $i_k>i_{k-1}>\dots>i_1\ge m$. Let now $l:=i_k + 1$. Then for each $n'\in \FS{(x_i)_{i=l}^\infty}$ the number $n'+n$ is an element of $\FS{(x_i)_{i=m}^\infty}$ or in other words $\FS{(x_i)_{i=l}^\infty} \subseteq \FS{(x_i)_{i=m}^\infty} - n$. Thus, $\FS{(x_i)_{i=l}^\infty} \subseteq X-n$, with this $X-n\in \FFS{(x_i)_i}$,  and the lemma follows.
\end{proof}

\begin{lemma}\label{lem:ss}
  Let $\mathcal{A}$ be a downward translation algebra. If for a sequence $(x_i)_{i\in\Nat}$ the set $\FFS{(x_i)_i}$  is a downward translation partial ultrafilter for $\mathcal{A}$ then for any sequence $(y_i)_{i\in\Nat}$ with $\FS{(y_i)_i} \subseteq \FS{(x_i)_i}$ we have
  $\FFS{(x_i)_i} \cap \mathcal{A} = \FFS{(y_i)_i} \cap \mathcal{A}$.
\end{lemma}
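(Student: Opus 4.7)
The plan is to establish the two inclusions $\FFS{(x_i)_i} \cap \mathcal{A} \subseteq \FFS{(y_i)_i} \cap \mathcal{A}$ (direction~A) and the reverse (direction~B) separately. Direction~B will follow easily from direction~A together with the partial ultrafilter property of $\FFS{(x_i)_i}$ on $\mathcal{A}$ and the filter property of $\FFS{(y_i)_i}$.

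For direction~A, let $X \in \FFS{(x_i)_i} \cap \mathcal{A}$ and fix $m$ with $\FS{(x_i)_{i=m}^\infty} \subseteq X$; the task is to find $m'$ with $\FS{(y_i)_{i=m'}^\infty} \subseteq X$. The hypothesis $\FS{(y_i)_i} \subseteq \FS{(x_i)_i}$ expresses each finite $y$-sum as some finite $x$-sum, and the idempotency of $\FFS{(x_i)_i}$ on $\mathcal{A}$ (axiom~\eqref{eq:dta}) gives, via iterated application of the $X^\star$-construction of \prettyref{lem:xstar}, a rich tree of $x$-tails inside $X$ and its downward translates. Using these tools, I would prove by induction on the number of summands that any sum $y_{i_1} + \cdots + y_{i_k}$ with a large enough starting index $i_1 \ge m'$ admits an $x$-decomposition avoiding indices below $m$, placing it in $\FS{(x_i)_{i=m}^\infty} \subseteq X$.

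For direction~B, let $X \in \FFS{(y_i)_i} \cap \mathcal{A}$. By the partial ultrafilter property of $\FFS{(x_i)_i}$ on $\mathcal{A}$, either $X$ or $\overline{X}$ lies in $\FFS{(x_i)_i}$. In the second case $\overline{X} \in \mathcal{A}$ as well (since $\mathcal{A}$ is an algebra), so direction~A applied to $\overline{X}$ yields $\overline{X} \in \FFS{(y_i)_i}$, and then the filter $\FFS{(y_i)_i}$ would contain $X \cap \overline{X} = \emptyset$, contradicting the observation recorded just before \prettyref{lem:dtuup} that $\FFS{(y_i)_i}$ contains only infinite sets.

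I expect the main obstacle to lie in the induction of direction~A: the hypothesis $\FS{(y_i)_i} \subseteq \FS{(x_i)_i}$ alone does not control which $x$-indices appear in the decompositions of $y$-sums, so axiom~\eqref{eq:dta} must be used to pass from $X \supseteq \FS{(x_i)_{i=m}^\infty}$ to enlarged witnesses $X \supseteq n + \FS{(x_i)_{i=m_n}^\infty}$ for $n$ ranging over a further $x$-tail, so as to absorb small-index contributions and keep running $y$-sums inside $X$.
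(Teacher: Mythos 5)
Your direction~B is exactly the paper's argument: maximality of $\FFS{(x_i)_i}$ on $\mathcal{A}$ combined with the fact that the filter $\FFS{(y_i)_i}$ contains at most one of $X$ and $\overline{X}$. The paper in fact obtains the whole lemma this way, after first asserting the single containment $\FFS{(x_i)_i} \subseteq \FFS{(y_i)_i}$ as immediate from the definition. Your direction~A is where you diverge: you correctly sense that $\FS{(y_i)_i} \subseteq \FS{(x_i)_i}$ by itself does not let you pass from $\FS{(x_i)_{i=m}^\infty} \subseteq X$ to $\FS{(y_i)_{i=m'}^\infty} \subseteq X$, and you propose to bridge the gap with \eqref{eq:dta} and an induction in the style of \prettyref{lem:xstar}.

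That bridge cannot be built: the inclusion you are trying to prove is false under the literal hypotheses, so no induction will close it. Take $x_i = 2^i$, so that $\FS{(x_i)_i} = \Nat_{\ge 1}$ and $\FS{(x_i)_{i=m}^\infty}$ is the set of positive multiples of $2^m$, and let $\mathcal{A} = \{\emptyset, E, \overline{E}, \Nat\}$ be the downward translation algebra generated by the even numbers $E$. Then $\FFS{(x_i)_i}$ is a downward translation partial ultrafilter for $\mathcal{A}$ (it contains $E$, and \eqref{eq:dta} holds by \prettyref{lem:dtuup}). Now put $y_i = 2^i+1$: the hypothesis $\FS{(y_i)_i} \subseteq \FS{(x_i)_i}$ holds trivially, yet every tail $\FS{(y_i)_{i=m'}^\infty}$ contains the odd number $y_{m'}$ and the even number $y_{m'}+y_{m'+1}$, so $\FFS{(y_i)_i}$ contains neither $E$ nor $\overline{E}$ and the conclusion fails. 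The ingredient that is actually needed---and that is available at the lemma's point of use in \prettyref{thm:dtex}, where $(y_i)_i$ is produced by \lp{IHT} via \prettyref{thm:htps}---is that $(y_i)_i$ is a \emph{sum subsystem} of $(x_i)_i$: each $y_j$ is a sum of $x_i$'s over blocks of indices whose minima increase with $j$, so that for every $m$ there is $m'$ with $\FS{(y_i)_{i=m'}^\infty} \subseteq \FS{(x_i)_{i=m}^\infty}$. That yields $\FFS{(x_i)_i} \subseteq \FFS{(y_i)_i}$ on the whole power set, and the rest is your direction~B. The missing idea is this strengthened reading of the hypothesis on $(y_i)_i$, not anything extractable from \eqref{eq:dta}.
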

\begin{proof}
  By definition of $\FFS{(x_i)_i}$ we have that $\FFS{(x_i)_i} \subseteq \FFS{(y_i)_i}$.
  Moreover, the set $\FFS{(y_i)_i}$ is a filter and, therefore, contains for each $X$ at most one of $X$ and $\overline{X}$.
  Now $\FFS{(x_i)_i}$ is maximal in $\mathcal{A}$ in the sense that for each $X\in\mathcal{A}$ either $X$ or $\overline{X}$ is an element in $\FFS{(x_i)_i}$. Thus, both filters must be equal on $\mathcal{A}$.
\end{proof}

\begin{theorem}\label{thm:dtex}
  Let $\mathcal{A}$ be a countable downward translation algebra and let $(x_i)_{i\in\Nat}$ be a sequence such that $\FFS{(x_i)_i}$ is a downward translation partial ultrafilter for $\mathcal{A}$. Then $\ls{RCA_0^\omega} + \lpf{\mu} + \lp{IHT}$ proves that for each countable downward translation algebra $\mathcal{\tilde{A}} = (\tilde{A}_i)_{i\in\Nat} \supseteq \mathcal{A}$ there exists a sequence $(y_i)_{i\in\Nat}$ with $\FS{(y_i)_i)} \subseteq \FS{(x_i)_i}$, such that $\FFS{(y_i)_i}$ is a downward translation partial ultrafilter for $\mathcal{A}$.

In particular,  $\FFS{(y_i)_i} \supseteq \FFS{(x_i)_i}$ and $\FFS{(y_i)_i} \cap \mathcal{A} = \FFS{(x_i)_i} \cap \mathcal{A}$.
\end{theorem}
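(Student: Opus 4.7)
The plan is to apply iterated Hindman's theorem to extract a block-sum subsequence of $(x_j)_j$ whose finite-sum tails decide every set in $\tilde{\mathcal{A}}$. Concretely, I would enumerate $\tilde{\mathcal{A}} = (\tilde A_k)_{k\in\Nat}$, regard each $\tilde A_k$ as a coloring $c_k \colon \FS{(x_j)_j} \longrightarrow 2$ via its characteristic function, and apply the $\FS$-form of iterated Hindman, \prettyref{thm:htps}(ii), on the base set $Y := \{x_j : j \in \Nat\}$. The standard proof of this theorem lifts the $c_k$ to colorings of finite non-empty subsets of $\Nat$ by $\tilde c_k(F) := c_k(\sum_{j\in F} x_j)$ and invokes \lp{IHT} to produce a sequence of finite blocks $B_0 < B_1 < \cdots$ (with $\max B_i < \min B_{i+1}$) such that for each $k$ the unions $\bigcup_{s\in F} B_s$ with $F \subseteq \{k,k+1,\ldots\}$ are $\tilde c_k$-homogeneous. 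Setting $y_i := \sum_{j\in B_i} x_j$ gives $(y_i)_i \subseteq \FS{(x_j)_j}$ with $\FS{(y_i)_{i=k}^\infty}$ being $c_k$-homogeneous, hence entirely contained in $\tilde A_k$ or in $\overline{\tilde A_k}$.

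So for every $k$ exactly one of $\tilde A_k, \overline{\tilde A_k}$ lies in $\FFS{(y_i)_i}$, which is the decidability clause of~\eqref{eq:pnu} for $\tilde{\mathcal{A}}$. The remaining clauses hold because $\FFS{(y_i)_i}$ is automatically a filter containing only infinite sets; its characteristic function is definable as a type-$2$ object using $\lpf{\mu}$ to search for the witness $m$ with $\FS{(y_i)_{i=m}^\infty}\subseteq X$, which also delivers the extensionality clause in~\eqref{eq:pnu}. Thus $\FFS{(y_i)_i}$ satisfies the axioms of a partial non-principal ultrafilter for $\tilde{\mathcal{A}}$, and by \prettyref{lem:dtuup} it is automatically a downward translation partial ultrafilter for $\tilde{\mathcal{A}}$.

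For the ``In particular'' clause, the equality $\FFS{(y_i)_i}\cap\mathcal{A} = \FFS{(x_i)_i}\cap\mathcal{A}$ is immediate from \prettyref{lem:ss}. The inclusion $\FFS{(y_i)_i}\supseteq\FFS{(x_i)_i}$ uses the block structure: given $X\in\FFS{(x_i)_i}$ with $\FS{(x_j)_{j=m}^\infty}\subseteq X$, pick $l$ with $\min B_l \ge m$; then $\FS{(y_i)_{i=l}^\infty}\subseteq \FS{(x_j)_{j=m}^\infty}\subseteq X$, so $X\in\FFS{(y_i)_i}$. The main technical point I expect to need care is precisely this block-sum bookkeeping, i.e.\@ making sure the application of \prettyref{thm:htps}(ii) delivers a block-sum representation of the $(y_i)_i$ rather than just an arbitrary sequence in $\FS{(x_j)_j}$; together with the type-$2$ formalization of $\FFS{(y_i)_i}$ via $\lpf{\mu}$, this is essentially the only subtle step, the rest being a routine combination of iterated Hindman with \prettyref{lem:dtuup} and \prettyref{lem:ss}.
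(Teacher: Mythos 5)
Your proposal is correct and follows essentially the same route as the paper: the paper likewise colors $\FS{(x_i)_i}$ by the characteristic functions of the $\tilde A_k$, applies \lp{IHT} via \prettyref{thm:htps} to obtain $(y_i)_i$ with $\FS{(y_i)_i}\subseteq\FS{(x_i)_i}$ and each tail $\FS{(y_i)_{i=k}^\infty}$ contained in $\tilde A_k$ or $\overline{\tilde A_k}$, and then concludes via \prettyref{lem:dtuup} and \prettyref{lem:ss}. Your explicit block-sum bookkeeping for the inclusion $\FFS{(y_i)_i}\supseteq\FFS{(x_i)_i}$ just spells out a step the paper leaves to the definition of $\FFS{\cdot}$ and \prettyref{lem:ss}.
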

\begin{proof}
  Let
  \[
  c_k(x) :=
  \begin{cases}
    0 & \text{if $x\in \tilde{A}_k$,} \\
    1 & \text{if $x\notin \tilde{A}_k$.}
  \end{cases}
  \]
  By \lp{IHT} and \prettyref{thm:htps} there exists a sequence $(y_i)_i$ with $\FS{(y_i)_i} \subseteq \FS{(x_i)_i}$ such that 
  $\FS{(y_i)_{i=k}^\infty}$ is  homogeneous for $c_k$, i.e.\@ $\FS{(y_i)_{i=k}^\infty}$ is contained in either $\tilde{A}_k$ or $\overline{\tilde{A}_k}$. Thus, for each $k$ the filter $\FFS{(y_i)_i}$ contains either $\tilde{A}_k$ or $\overline{\tilde{A}_k}$ and is, therefore, a downward translation partial ultrafilter for $\mathcal{\tilde{A}}$.
\end{proof}

\begin{remark}[Stone-\v{C}ech compactification]
  The sets of filters of the form $\FFS{(x_i)_i}$ can be viewed as the following closed sets of the Stone-\v{C}ech compactification $\beta\Nat$:
  \[
  \bigcap_{m=1}^\infty \{ \U \in \beta\Nat \mid \FS{(x_i)_{i=m}^\infty} \in \U \}
  \]
  The use of such closed sets is inspired by \cite[Theorem~2.5]{vB10}, \cite[Lemma~5.11]{HS12} where it is shown that each such set contains an idempotent ultrafilter.
\end{remark}

\section{Proof theory}\label{sec:prooftheory}

We proceed like in \cite{aK12b}.
The elimination of extensionality \cite[Lemma~7]{aK12b} is also applicable to \lpf{\U_\textrm{idem}} instead of \lpf{\U} since 
\[ 
\lpf{\U_\textrm{idem}} \equiv \lpf{\U} \text{ extended by } \Forall{X^1} \left(X\in\U \IMPL \left\{\, k\in\Nat \mid X - k \in \U \,\right\} \in \U \right)
\]
and the added axiom contains only quantification over variables of degree $\le 1$.
Therefore, it is not changed by the elimination of extensionality translation.
We obtain the following lemma.
\begin{lemma}[elimination of extensionality, cf.~{\cite[Section~10.4]{uK08}}]\label{lem:eliex}
  If $\lf{A}$ is a sentence that contains only quantification over variables of degree $\le 1$ and 
  \[
  \ls{RCA_0^\omega} \vdash \lpf{\Uidem} \IMPL \lf{A}
  \]
  then
  \[
  \WEPAw + \lp[QF]{AC^{1,0}} \vdash \lpf{\Uidem} \IMPL \lf{A}
  .\]
  Here $\WEPAw + \lp[QF]{AC^{1,0}}$ is the weakly extensional counterpart to \ls{RCA_0^\omega}.

  Since $\mu$ is provably extensional this lemma remains true if one adds $\lpf{\mu}$ to both systems.
\end{lemma}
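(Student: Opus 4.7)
My plan is to reduce the claim to the elimination-of-extensionality result for $\lpf{\U}$ already established in \cite[Lemma~7]{aK12b}. The underlying translation (cf.~\cite[Section~10.4]{uK08}) replaces full extensionality $E_{\rho,\tau}$ by the quantifier-free extensionality rule. Its key property, which I would quote, is that on sentences whose quantifiers all range over variables of degree $\le 1$ the translation acts as the identity: such quantifiers produce no non-trivial extensionality side conditions, because $=_1$ is pointwise on type $0$, and $\lp[QF]{AC^{1,0}}$ together with $\Sigma^0_1$-induction handles the pointwise-equality propagation needed in place of the missing instances of $E_{\rho,\tau}$.

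I would then split $\lpf{\Uidem}$ as $\lpf{\U}$ together with the idempotency clause
\[
\Forall{X^1}\bigl(X\in \U \IMPL \{\,k\in \Nat\mid X-k\in \U\,\}\in \U\bigr),
\]
where, after unfolding the inner set to its characteristic function $\lambda k.\,\U(\lambda m.\, X(m+k))$, every quantifier occurring is of type $0$ or type $1$, hence of degree $\le 1$. The translation therefore fixes this clause. Combined with the fact from \cite[Lemma~7]{aK12b} that $\lpf{\U}^{\mathrm{ex}}$ is provably equivalent to $\lpf{\U}$ in $\WEPAw+\lp[QF]{AC^{1,0}}$, applying the translation to the given derivation of $\lpf{\Uidem}\IMPL \lf{A}$ yields a derivation of the same implication in $\WEPAw+\lp[QF]{AC^{1,0}}$, since $\lf{A}$ too contains only degree-$\le 1$ quantifiers and is fixed.

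For the $\mu$-addendum I would invoke the provable extensionality of $\mu$: from the defining property $f(\mu f)=0$ for $\Exists{x}f(x)=0$ one derives in $\WEPAw+\lpf{\mu}$ that $f=_1 g \IMPL \mu f=_0 \mu g$ (realising $\mu$ by least-zero search using $\Sigma^0_1$-induction), so $\lpf{\mu}$ is preserved by the translation and can be attached to both systems without altering the argument. I do not anticipate a genuine obstacle: the proof is essentially a syntactic check that the new axiom has only degree-$\le 1$ quantifiers, plus a black-box invocation of \cite[Lemma~7]{aK12b}; the only place one might have to be mildly careful is verifying that the formulation used in the reduction still exposes the set $\{\,k\mid X-k\in\U\,\}$ as a term of type $1$ so that the quantifier-degree count remains valid after the elimination-of-extensionality unfolding.
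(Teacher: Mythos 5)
Your proposal is correct and follows essentially the same route as the paper: the paper's justification is precisely that $\lpf{\Uidem}$ is $\lpf{\U}$ extended by the idempotency clause, that this clause quantifies only over variables of degree $\le 1$ and is therefore left unchanged by the elimination-of-extensionality translation, so \cite[Lemma~7]{aK12b} applies verbatim, with the $\mu$-addendum handled by the provable extensionality of $\mu$. Your additional unfolding of the inner set as $\lambda k.\,\U(\lambda m.\,X(m+k))$ just makes explicit the degree count the paper takes for granted.
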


We will also use the following term normalization result.

\begin{theorem}[term normalization for degree $2$, {\cite[Theorem~8]{aK12b}, \cite{uK99,AF98}}]\label{thm:cut2}
  Let $F_1,\dots,F_n$ be constants of degree~$\le 2$.

  For every term $t^1\in T_0[F_0,\dots,F_{n-1}]$ there is a term $\tilde{t}\in T_0[F_0,\dots,F_{n-1}]$ with
  \[
  \WEPAw \vdash t =_1 \tilde{t}
  \]
  and such that
  every occurrence of an $F_i$ in $\tilde{t}$ is of the form
  \[
  F_i(\tilde{t}_0[y^0], \dots, \tilde{t}_{k-1}[y^0])
  .\]
  Here $k$ is the arity of $F_i$, and $\tilde{t}_j[y^0]$ are fixed terms whose only free variable is~$y^0$.
\end{theorem}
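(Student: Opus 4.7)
The plan is to follow the normalization technique of \cite[Theorem~8]{aK12b}, which in turn builds on \cite{uK99,AF98}, proceeding by induction on the structure of $t$ after putting it in long $\beta\eta$-normal form. First I would $\eta$-expand every subterm of type $\ge 1$ so that each occurrence of an $F_i$ appears fully applied to arguments matching its arity; in particular, every type-$1$ argument of $F_i$ takes the form $\lambda z^0.\, s$ with $s$ of type~$0$. Then, reasoning in $\WEPAw$, I would apply the defining equations of $\Pi$, $\Sigma$ to contract $\beta$-redexes as far as possible.

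Next I would analyze the free variables occurring in the $F_i$-arguments of the normalized term. Since $T_0$ restricts primitive recursion to $R_0$ and each $F_i$ has degree $\le 2$, the only possible sources of a free variable distinct from $y^0$ in such an argument are bound variables of enclosing $\lambda$-abstractions: either those arising from the reduction of $\Pi$- and $\Sigma$-combinators, or those introduced by the step function of an enclosing $R_0$. The former are eliminated by a $\beta$-step, which does not create new $F_i$-subterms and is compatible with provable equality in $\WEPAw$; the latter form the essential case.

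The main obstacle is therefore the control of $F_i$ occurrences inside the step function of an $R_0$, since the variables bound by such a step function are of type~$0$ and can naturally appear free in $F_i$-arguments. The technique from the cited references exploits that the accumulator of $R_0$ is of type~$0$: one uniformly rewrites the subterm so that the $F_i$-calls are lifted out of the recursion to a position where their arguments depend only on $y^0$, possibly at the cost of an increase in the outer structure of $\tilde{t}$ and of nesting further $F_i$-calls inside the arguments (which is permitted by the statement, provided each such nested occurrence is itself of the prescribed form). Once the combinatorial rewriting is in place, the equality $\WEPAw \vdash t =_1 \tilde{t}$ is routine, using only the defining equations of $\Pi,\Sigma$, and $R_0$ available in $\WEPAw$, together with the extensionality rule $=_1$.
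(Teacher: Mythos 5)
First, note that the paper does not actually prove this theorem: it is imported verbatim from \cite[Theorem~8]{aK12b} (going back to \cite{uK99,AF98}), so your proposal has to stand on its own --- and as it stands it has a genuine gap exactly where the theorem has its content. You correctly isolate the recursor as the essential case, but then you dispose of it with the sentence that ``one uniformly rewrites the subterm so that the $F_i$-calls are lifted out of the recursion''. That sentence \emph{is} the theorem, not a step of its proof. Concretely, for a subterm $R_0\, a\, b\, (\lambda v^0 u^0.\, s)$ in which $s$ contains $F_i(\lambda w.\, r[v,u,w])$, the value at stage $k+1$ is obtained by applying $F_i$ to data depending on the value at stage $k$, which is itself computed via $F_i$; so the $F_i$-calls cannot literally be lifted out of the recursion, and the obvious repair --- coding the bound variables $v,u$ together with the outer parameters into a single variable by pairing --- is undone as soon as the paired argument is substituted back in. The cited proofs get around this by an induction on the normalized term with a carefully chosen invariant (the variable $y^0$ of the conclusion is in general a \emph{bound} variable of the ambient term, and what one maintains is that each $F_i$-occurrence sits under at most one relevant type-$0$ binder). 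Your proposal states no induction hypothesis and no invariant, so the decisive step is unverifiable.

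Second, your inventory of where extra free variables can come from is incomplete: besides $\Pi/\Sigma$-redexes and $R_0$ step functions, the $\lambda$-binders of the type-$1$ arguments of the $F_j$'s themselves bind type-$0$ variables, and nested occurrences such as $F_j(\lambda n.\, \dots F_i(\lambda m.\, r[n,m]) \dots)$ put two free variables into the innermost argument. These are not exotic; they are exactly the terms produced by iterated applications of $\U$ (for instance via the idempotency clause $\{\, n \mid X-n \in \U \,\} \in \U$), and the rest of the paper depends on the resulting list $(t_i)_{i<n}$ of $\U$-arguments consisting of single-variable terms ordered by the subterm relation, so this case must be treated explicitly. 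Two smaller points: combinator reduction ($\Sigma xyz = xz(yz)$) duplicates arguments and hence can multiply $F_i$-occurrences, so ``does not create new $F_i$-subterms'' is false as stated; and the claim $\WEPAw \vdash t =_1 \tilde t$ requires quantifier-free induction for the recursor unwinding, not merely the defining equations, since $=_1$ abbreviates a universally quantified statement and $\WEPAw$ has only the weak rule of extensionality.
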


The axiom \lpf{\Uidem} can be prenexed to the following statement.
\begin{align*}
  \Exists{\U^2}\Forall{X^1,Y^1}\Forall{n}\Exists{k}  \big(\ &\Forall{X} \left(X\in\U \OR \overline{X}\in\U\right) \\
  \AND\, & \left(X \cap Y \in \U \IMPL Y\in \U\right) \\
  \AND\, & \left(X,Y\in \U \IMPL (X\cap Y)\in\U\right)  \\
  \AND\, & \left(X\in\U \IMPL (k>n \AND  k\in X)\right) \\
  \AND\, & \left(X\in\U \IMPL \left\{\, n\in\Nat \mid X - n \in \U \,\right\} \in \U \right) \\
  \AND\, & \left(\U(X) =_0 \sg(\U(X)) =_0 \U(\lambda n. \sg(X(n)))\right)\big)
\end{align*}
By coding the sets $X,Y$ together we obtain the following
\[
\Exists{\U^2}\Forall{Z^1}\Forall{n}\Exists{k} \lpf{\Uidem}_\qf(\U,Z,n,k)
\]
where $\lpf{\Uidem}_\qf$ is the quantifier free matrix of the above statement.

An application of \lp[QF]{AC^{1,0}} yields
\begin{equation}\label{eq:uidem}
  \Exists{\U^2}\Exists{K^2}\Forall{Z}\Forall{n} \lpf{\Uidem}_\qf(\U,Z,n,KnZ)
\end{equation}

The variable $K$ may always be chosen to be the following functional definable using $\mu$.
\begin{equation}\label{eq:k'}
  K'(n,X) :=
  \begin{cases}
    \min\{ k \in X \mid k>n\} & \text{if exists,} \\
    0 & \text{otherwise.}
  \end{cases}
\end{equation}
Therefore, the real difficulty is the construction of a suitable $\U$.

\begin{proof}[Proof of \prettyref{thm:main}]
  Let $\Forall{f}\Exists{g}\lf{A}(f,g)$ a $\Pi^1_2$-statement not containing $\U$ or $\mu$ and provable in $\ls{ACA_0^\omega} + \lpf{\mu} + \lp{IHT} + \lpf{\Uidem}$.
  Since \lpf{\Uidem} implies \lpf{\mu}, \lp{ACA_0}, and \lp{IHT} we arrive at
  \[
  \ls{RCA_0^\omega} + \lpf{\Uidem} \vdash \Forall{f} \Exists{g} \lf{A}(f,g)
  .\]
  Using $\mu$ one can find a quantifier free formula $\lf{A'_\qf}(f,g)$ such that $\lf{A'_\qf}$ does not contain $U$ and $\lf{A}(f,g) \IFF \lf{A'_\qf}(f,g)$. Together with the deduction theorem we arrive at the following.
  \[
  \ls{RCA_0^\omega} + \lpf{\mu} \vdash \lpf{\Uidem} \IMPL  \Forall{f} \Exists{g} \lf{A'_\qf}(f,g)
  \]
  Applying the elimination of extensionality we get
  \[
  \WEPAw + \lp[QF]{AC^{1,0}} + \lpf{\mu} \vdash \lpf{\Uidem} \IMPL  \Forall{f} \Exists{g} \lf{A'_\qf}(f,g)
  .\]
  After reintroducing a variable $\U$ for the ultrafilter and using \eqref{eq:uidem} we obtain
  \[
  \left(\Exists{\U^2}\Exists{K^2}\Forall{Z}\Forall{n} \lpf{\Uidem}_\qf(\U,Z,n,KnZ)\right)\IMPL  \Forall{f} \Exists{g} \lf{A'_\qf}(f,g)
  .\]
  Pulling out the quantifiers we get
  \[
  \Forall{f}\Forall{\U^2}\Forall{K^2} \Exists{Z^1} \Exists{n} \Exists{g}
  \lpf{\Uidem}_\qf(\U,Z,n,KnZ)\IMPL\lf{A'_\qf}(f,g)
  .\]
  A functional interpretation yields $t_Z,t_n,t_g \in T_0[\mu,\U,K,f]$ such that
  \begin{equation}\label{eq:1}
    \WEPAw + \lpf{\mu} \vdash \Forall{f}\Forall{\U^2}\Forall{K^2} \lpf{\Uidem}_\qf(\U,t_Z,t_n,Kt_nt_Z)\IMPL\lf{A'_\qf}(f,t_g)
  .\end{equation}

  Applying \prettyref{thm:cut2} to $t_Z,t_n,t_g$ we obtain terms  $t_Z',t_n',t_g'$, which are provably equal and where every occurrence of $\U, K$ is of the form
  \[
  \U(t[j^0])\quad\text{resp.}\quad K(n^0,t[j^0])
  \]
  for a $t\in T_0[\mu,\U,K,f]$.
  
  Let $(t_i)_{i<n}$ be the list of all of these terms $t$ to which $\U$ and $K$ are applied. Assume that this list is partially ordered according to the subterm ordering, i.e.\@ if $t_i$ is a subterm of $t_j$ then $i<j$.

  We will now build for each $f$ a downward translation partial ultrafilter $\F$, which acts on these occurrences like a real idempotent non-principal ultrafilter. For this fix an arbitrary $f$.

  The filter $\F$ is build by iterated applications of \prettyref{thm:dtex}: We start with the trivial downward translation algebra 
  \[
  \mathcal{A}_{-1} := \{\, X \subseteq \Nat \mid \text{$X$ is finite or cofinite} \,\}
  \]
  and the Fr\`{e}chet filter 
  \[
  \F_{-1} :=  \{\, X \subseteq \Nat \mid \text{$X$ is cofinite} \,\} = \FFS{(i)_{i\in\Nat}}
  .\]
  It is clear that $\F_{-1}$ is a partial non-principal ultrafilter for $\mathcal{A}_{-1}$. By \prettyref{lem:dtuup} it is also a downward translation partial ultrafilter for $\mathcal{A}_{-1}$.

  Assume now that $\mathcal{A}_{i-1}$ and $\F_{i-1}$ are already defined.
  Let $\mathcal{A}_i$ be the downward translation algebra spanned by $\mathcal{A}_{i-1}$ and the sets described by $t_i$ where $\U,K$ are replaced by $\F_{i-1}$ and $K'$ from \eqref{eq:k'}, i.e.\@ ${\big(t_i[\U/ \F_{i-1},K/K'] (j)\big)}_{j\in\Nat}$. Let $\F_i=\FFS{(y_k)_k}$ be the extension of $\F_{i-1}=\FFS{(x_k)_k}$ to the new downward translation algebra $\mathcal{A}_i$ as constructed in \prettyref{thm:dtex}.

  Since $\U$ is in $t_i$ only applied to subterms of $t_i$ we obtain by the construction of the filter and \prettyref{lem:ss} that
  \[
  t_i[\U/ \F_{j-1},K/K'] =_1 t_i[\U/ \F_{j'-1},K/K'] \qquad\text{for all $j,j'>i$}
  .\]

  For the resulting downward translation partial ultrafilter $\F:=\F_n$ we obtain that
  \[
  \lpp{(\Uidem)_\qf}{\F,t_Z[\F,K',f] ,t_n[\F,K',f],K't_n[\F,K',f]t_Z[\F,K',f]}
  .\]
  In total we get
  \begin{multline*}
  \ls{RCA_0^\omega} + \lpf{\mu} + \lp{IHT} \vdash \\ \Forall{f} \Exists{\F} \lpp{(\Uidem)_\qf}{\F,t_Z[\F,K',f] ,t_n[\F,K',f],K't_n[\F,K',f]t_Z[\F,K',f]}
  .\end{multline*}

  Combining this with \eqref{eq:1} we get 
  \[
  \ls{RCA_0^\omega} + \lpf{\mu} + \lp{IHT} \vdash \Forall{f} \Exists{\F} \lf{A'_\qf}(f,t_g[\mu,\F,K,f])
  \]
  and hence
  \begin{equation}\label{eq:2}
    \ls{RCA_0^\omega} + \lpf{\mu} + \lp{IHT} \vdash \Forall{f} \Exists{g} \lf{A'_\qf}(f,g)
  .\end{equation}
  Replacing $\lf{A'_\qf}$ with the equivalent formula $\lf{A}$ which does not contain $\mu$ we obtain
  \[
  \ls{RCA_0^\omega} + \lpf{\mu} + \lp{IHT} \vdash \Forall{f} \Exists{g} \lf{A}(f,g)
  .\]
  Noting that \lp{IHT} is analytic and applying \prettyref{thm:jh} we get
  \[
  \ls{ACA_0}+ \lp{IHT} \vdash \Forall{f} \Exists{g} \lf{A}(f,g)
  .\qedhere \]
\end{proof}

\begin{proof}[Proof of \prettyref{thm:pe}]
  In $T_1[\mu]$ one can define a functional which maps a set $X$ to its $\omega$-Turing jump. Therefore, the functional interpretation of \ls{ACA_0^+} can be solved in $T_1[\mu]$. Since $\ls{ACA_0^+} \vdash \lp{IHT}$, we can find a solution of the functional interpretation of \eqref{eq:2}. This implies \prettyref{thm:pe}.
\end{proof}

\begin{remark}
  Theorems \ref{thm:main} and \ref{thm:pe} remain true if \lp{\Uidem} is replaced by the statement that an idempotent ultrafilter for a countable semigroup $G$ exists. This follows simply from the fact that Hindman's theorem and iterated Hindman's theorem for $\Nat$ imply their variants for any countable semigroup, see \cite[Lemma~2.1]{BH93}, and the fact that we did not use any property of the natural numbers. In fact, we carefully formulated the definitions and proofs such that we did not even use commutativity of addition.
\end{remark}
\begin{remark}
  \prettyref{thm:pe} remains true if one replaces \eqref{eq:pe} by the following.
  \begin{multline*}
  \ls{ACA_0^\omega} + \lpf{\mu} + \lp{IHT} \vdash \\
  \Forall{f} \big(\Exists \U \,\text{[$\U$ is an idempotent ultrafilter extending $t_\F(f)$]} \IMPL \Exists{g} \lf{A}(f,g)\big)
  \end{multline*}
  where $t_\F$ is a closed term such that $t_\F(f)$ codes a downward translation partial ultrafilter.
  A similar statement also holds for \prettyref{thm:main}.

  This follows by taking $t_\F(f)$ instead of the trivial filter for $\F_{-1}$ in the proof of \prettyref{thm:main}.
\end{remark}

\section{Strongly Summable Ultrafilters}\label{sec:ss}

A strongly summable ultrafilter is an ultrafilter $\U$ such that for each $X\in \U$ there exists a strictly ascending sequence $(x_i)_{i\in\Nat}$ such that $\FS{(x_i)_i}\subseteq X$ and $\FS{(x_i)_i} \in \U$.

It is known that each strongly summable ultrafilter is idempotent. However, the reverse is not true---while each set in an idempotent ultrafilter contains a set of the form $\FS{(x_i)_{i\in \Nat}}$ in general this set is not contained in the ultrafilter.
The existence of strongly summable ultrafilters is not provable in ZFC\@. The existence follows for instance from Martin's Axiom.
See Chapter~12 of \cite{HS12} for details.

We will now show how to modify the above proofs to obtain conservativity and program extraction for strongly summable ultrafilters.
Let $\lpf{\Uss}$ be that statement that a strongly summable ultrafilter exists, i.e.~\eqref{eq:defultra} from \prettyref{def:ultra} plus the requirement
\[
\Forall{X} \left(X\in \U \IMPL \Exists{(x_i)_{i\in\Nat}} \left(\FS{(x_i)_i}\in \U\right)\AND \FS{(x_i)_i} \subseteq X\right)
.\]

\begin{theorem}
  The Theorems~\ref{thm:main} and \ref{thm:pe} hold with \lpf{\Uidem} replaced by \lpf{\Uss}.
\end{theorem}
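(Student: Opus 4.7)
The plan is to replay the proofs of \prettyref{thm:main} and \prettyref{thm:pe} almost verbatim, exploiting that the partial ultrafilters $\FFS{(y_k)_k}$ constructed for the idempotent case are already partial \emph{strongly summable} ultrafilters in a natural sense. Concretely, if $X\in\FFS{(y_k)_k}$ then by definition some $m$ satisfies $\FS{(y_k)_{k=m}^\infty}\subseteq X$, and the subsequence $(y_{k+m})_k$ then witnesses the strongly summable axiom for $X$: $\FS{(y_{k+m})_k}=\FS{(y_k)_{k=m}^\infty}$ sits inside $X$ and is trivially in $\FFS{(y_k)_k}$. So no new combinatorics is required; the only task is to accommodate the extra Skolem functional introduced by \lpf{\Uss}.

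I first adapt the prenexing of \prettyref{sec:prooftheory}. The strongly summable axiom $\Forall{X}(X\in\U\IMPL\Exists{x^1}(\FS{x}\in\U\AND \FS{x}\subseteq X))$ Skolemizes to $\Exists{S^{1(1)}}\Forall{X}\dots$, so the prenex Skolem form of \lpf{\Uss} reads
\[
\Exists{\U^2}\Exists{K^2}\Exists{S^{1(1)}}\Forall{Z^1}\Forall{n^0}\ \lpf{\Uss}_\qf(\U,K,S,Z,n).
\]
Since $S$ has degree $2$, \prettyref{thm:cut2} applies unchanged and yields functional-interpretation terms $t_Z,t_n,t_g\in T_0[\mu,\U,K,S,f]$ in which every application of $\U$, $K$ or $S$ sits on an argument of the form $t[j^0]$ drawn from a fixed, finite, subterm-ordered list $(t_i)_{i<n}$. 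The iterated construction of $\F=\FFS{(y_k)_k}$ and the algebras $\mathcal{A}_{-1}\subseteq\dots\subseteq\mathcal{A}_n$ via \prettyref{thm:dtex} then proceeds as before, and I interpret $S$ by
\[
S(X):=\lambda i.\, y_{i+m_X},\qquad m_X:=\mu m\,[\FS{(y_k)_{k=m}^\infty}\subseteq X].
\]
By the observation of the first paragraph the matrix $\lpf{\Uss}_\qf(\F,K',S,\cdot,\cdot)$ is then satisfied on every query arising in the proof, and the remainder---elimination of extensionality, substitution, invocation of \prettyref{thm:jh} for conservativity, and realization in $T_1[\mu]$ for the program extraction---goes through word-for-word.

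The main obstacle is verifying that $S$ is really definable using $\mu$. One needs the predicate $\FS{(y_k)_{k=m}^\infty}\subseteq X$ to be $\mu$-decidable; it is $\Pi^0_1$, but strict monotonicity of $(y_k)$ forces $y_k\ge k$, so decompositions $n=y_{i_1}+\dots+y_{i_\ell}$ with $i_1\ge m$ range over a computably bounded finite set and the inner predicate is bounded-quantifier decidable. A single $\mu$ then resolves the outer $\Forall n$ and an outer $\mu$ the search for least $m$, placing $S$ in $T_0[\mu,\F]$ (and in $T_1[\mu]$ once $\F$ is realized using \lp{IHT} as in \prettyref{thm:pe}). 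Coherence of $S$ across the stages $\F_{-1},\dots,\F_n$ is automatic because $S$ is defined relative to the final sequence $(y_k)=(y_k^{(n)})$ and \prettyref{lem:ss} guarantees that the evaluations of $\F$ on the terms $t_i[j]$ are unchanged by further refinement. Once this is in hand, the strongly summable version is obtained at no extra combinatorial cost.
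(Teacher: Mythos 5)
Your reduction to the idempotent machinery is the right idea, and your observation that $\FS{(y_k)_{k=m}^\infty}$ lies trivially in $\FFS{(y_k)_k}$ is correct, but the way you interpret the Skolem functional $S$ hides a genuine gap. You define $S$ from the \emph{final} sequence $(y_k)=(y_k^{(n)})$, yet $S$ can occur nested inside the normalized terms $t_i$ to which $\U$, $K$, $S$ are applied (that is exactly why the list $(t_i)_{i<n}$ is ordered by the subterm relation). The construction of the algebra $\mathcal{A}_i$ at stage $i$ requires evaluating $t_i[\U/\F_{i-1},K/K',S/\,?\,]$, so some interpretation of $S$ must already be fixed before the final sequence exists: using the final $S$ is circular. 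The obvious stage-wise alternative---$S_{i}(X):=$ the appropriate tail of the generating sequence of $\F_{i}$---is \emph{not} stable under refinement: when $\F_{i}=\FFS{(x_k)_k}$ is refined to $\F_{i+1}=\FFS{(y_k)_k}$ the generating sequence itself changes, so $S_i(X)\neq S_{i+1}(X)$ as type-$1$ objects even though the membership values $\F_i(X)$ and $\F_{i+1}(X)$ agree on the algebra. \prettyref{lem:ss} only controls membership decisions on sets already in the algebra; it says nothing about the values of $S$, so your ``coherence is automatic'' claim does not follow from it.

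The paper's proof addresses exactly this point with two devices you are missing: (i) it \emph{freezes} the value, setting $\mathcal{I}_i(X):=\mathcal{E}_j(X)$ for the \emph{least} $j\le i$ with $X\in\F_j$, so that the interpretation of the Skolem functional on any set is fixed once and for all at the first stage where that set enters the filter; and (ii) it \emph{enlarges the algebras} $\mathcal{A}_i$ by the resulting finite-sum sets $\FS{\mathcal{I}_i(X)}$, which is necessary because a tail of an \emph{earlier} generating sequence is not automatically a member of a \emph{later} filter $\FFS{(y_k)_k}$---only after it has been put into the algebra does \prettyref{lem:ss} guarantee that it survives all further refinements. Without (i) your terms evaluate inconsistently across stages, and without (ii) the conjunct $\FS{\mathcal{\hat{I}}(X)}\in\U$ of the matrix can fail for the frozen value. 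A smaller issue: obtaining your $\Exists{S^{1(1)}}$ form from \lpf{\Uss} by Skolemization would require $\lp[QF]{AC^{1,1}}$, which is not available in \ls{RCA_0^\omega}; the paper instead simply replaces \lpf{\Uss} by the stronger uniform axiom \eqref{eq:uss}, which is legitimate since any proof from \lpf{\Uss} is a fortiori a proof from the uniform version.
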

\begin{proof}
  We proceed like in the original proof for idempotent ultrafilter.
  After the application of \prettyref{lem:eliex} we get
  \[
  \WEPAw + \lp[QF]{AC^{1,0}} \vdash \lpf{\Uss} \IMPL \lf{A}
  .\]
  
  Here we slightly diverge from the original proof and strength \lpf{\Uss} by adding a uniform functional which yields the finite sum set, i.e.
  \begin{equation}\label{eq:uss}\left\{
      \begin{aligned}
        \Exists{\U^2}\Exists{\mathcal{I}^2} \big(\ &\Forall{X} \left(X\in\U \OR \overline{X}\in\U\right) \\
        \AND\, &\Forall{X^1,Y^1} \left(X \cap Y \in \U \IMPL Y\in \U\right) \\
        \AND\, &\Forall{X^1,Y^1} \left(X,Y\in \U \IMPL (X\cap Y)\in\U\right)  \\
        \AND\, &\Forall{X^1} \left(X\in\U \IMPL \Forall{n}\Exists{k>n} (k\in X)\right) \\
        \AND\, &\Forall{X^1} \left(X\in\U \IMPL \left(\FS{\mathcal{\hat{I}}(X)}\in \U \AND \Forall{n\in\FS{\mathcal{\hat{I}}(X)}}\; n\in X \right)\right) \\
        \AND\, &\Forall{X^1} \left(\U(X) =_0 \sg(\U(X)) =_0 \U(\lambda n. \sg(X(n)))\right)\big)
      \end{aligned}
    \right.
  \end{equation}
  where $\mathcal{\hat{I}}(x) := \max(\mathcal{I}(x), \max_{x'<x}(\mathcal{\hat{I}}(x')+1))$ ensures that it describes a strictly increasing enumeration.

  After prenexation and the application of \lp[QF]{AC} we see that this statement is equivalent to a statement of the following form, cf.~\eqref{eq:uidem}.
  \[
  \Exists{\U^2}\Exists{\mathcal{I}^2}\Exists{K^2}\Forall{Z}\Forall{n} \lpf{\Uss}_\qf(\U,\mathcal{I},Z,n,KnZ)
  .\]
  
  Now the only thing we have to take care of is to build an approximation to the functional $\mathcal{I}$ along with the construction of the approximation for $\U$.
  Let $(t_i)_{i<j}$ be the list of terms to which $\U$, $\mathcal{I}$ and $K$ are applied to after normalizing the extracted terms. Again we assume that these terms are ordered with respect to the subterm ordering.

  Let $\mathcal{E}_i(X)$ be the functional which extracts the maximal part contained in $X$ of the generating sequence $(x_k)_{k\in\Nat}$ of $\F_i=\FFS{(x_k)_k}$ if $X\in \F_i$, i.e.\@  
  \[
  \mathcal{E}_i(X) :=
  \begin{cases}
    (x_k)_{k=m}^\infty &
    \begin{varwidth}{10cm}
      if $X\in \F_i=\FFS{(x_k)_k}$ and \\ $m$ is minimal satisfying $\FS{(x_k)_{k=m}^\infty}\subseteq X$,
    \end{varwidth} \\
    (0)_k & \text{otherwise.}
  \end{cases}
  \]
  Then we will set 
  \[
  \mathcal{I}_i(X) := 
  \begin{cases}
    \mathcal{E}_j(X) & \text{where $j\le i$ minimal with $X\in \F_j$ if such a $j$ exists,} \\
    (0)_k & \text{otherwise.}
  \end{cases}
  \]
  It is clear that $\mathcal{E}_i$ and $\mathcal{I}_i$ can be defined with the help of $\mu$. It is also clear that $\mathcal{I}_i(X) = \mathcal{I}_{i'}(X)$ if $\F_i(X) = \F_{i'}(X)$.

  Now we can proceed to build the filters $\F_i$ as we did in the original proof. The functional $\mathcal{I}$ can be interpreted in each step using $\mathcal{I}_i$. The only thing we change in the generation of $\F_i$ is that the algebras $\mathcal{A}_i$ will be extended by the sets $\mathcal{I}_i(\F_i)$ after each step to ensure that the finite sum subsets are included. By construction $\F_i$ remains a downward translation partial ultrafilter for the extension of $\mathcal{A}_i$. Thus, $\F_i$ and $\mathcal{I}_i$ prove \eqref{eq:uss} relativized to this algebra.
\end{proof}

\subsection*{Concluding remarks}

In \cite{hTna} Towsner also considers ultrafilters in reverse mathematics. He works with second order systems and formalizes an ultrafilter as a predicate on the second order sort of the system. This extension of \ls{ACA_0} is denoted by $\ls{ACA_0} + \exists\mathfrak{U}$. It is easy to see that $\ls{ACA_0} + \exists\mathfrak{U}$ can be embedded into $\ls{ACA_0^\omega} + \lpf{\U}$.
It is not known whether $\ls{ACA_0^\omega} + \lpf{\U}$ is conservative over $\ls{ACA_0} + \exists\mathfrak{U}$. The techniques usually used to show that finite type systems are conservative over second order systems (like the interpretation in \ls{ECF}) require continuity and $\U$ obviously is not continuous.
However, it might be possible to adapt Hunter's proof (\prettyref{thm:jh}) to obtain conservativity.

Towsner asked whether $\ls{ACA_0} + \exists\mathfrak{U} + \text{``every element of $\mathfrak{U}$ is an IP-set''}$ does imply \lp{ACA_0^+} (Question~4.4). Since this system implies \lp{IHT} and idempotent ultrafilters contain only IP-sets, \prettyref{thm:main} reduces this question to the open problem whether \lp{IHT} implies \lp{ACA_0^+}. Also, Towsner asks over which theories of reverse mathematics the existence of an idempotent ultrafilter is conservative (Question~4.6). \prettyref{thm:main} answers this question to a large extend.

\bibliographystyle{amsplain}
\bibliography{../bib}

\providecommand{\bysame}{\leavevmode\hbox to3em{\hrulefill}\thinspace}
\providecommand{\MR}{\relax\ifhmode\unskip\space\fi MR }
\providecommand{\MRhref}[2]{%
  \href{http://www.ams.org/mathscinet-getitem?mr=#1}{#2}
}
\providecommand{\href}[2]{#2}
\begin{thebibliography}{10}

\bibitem{AF98}
Jeremy Avigad and Solomon Feferman, \emph{G\"odel's functional
  (``{D}ialectica'') interpretation}, Handbook of proof theory, Stud. Logic
  Found. Math., vol. 137, North-Holland, Amsterdam, 1998, pp.~337--405.
  \MR{1640329}

\bibitem{vB96}
Vitaly Bergelson, \emph{Ergodic {R}amsey theory---an update}, Ergodic theory of
  {${\bf Z}^d$} actions ({W}arwick, 1993--1994), London Math. Soc. Lecture Note
  Ser., vol. 228, Cambridge Univ. Press, Cambridge, 1996, pp.~1--61.
  \MR{1411215}

\bibitem{vB10}
\bysame, \emph{Ultrafilters, {IP} sets, dynamics, and combinatorial number
  theory}, Ultrafilters across mathematics, Contemp. Math., vol. 530, Amer.
  Math. Soc., Providence, RI, 2010, pp.~23--47. \MR{2757532}

\bibitem{BH93}
Vitaly Bergelson and Neil Hindman, \emph{Additive and multiplicative {R}amsey
  theorems in {${\bf N}$}---some elementary results}, Combin. Probab. Comput.
  \textbf{2} (1993), no.~3, 221--241. \MR{1264030}

\bibitem{BTna}
Vitaly Bergelson and Terence Tao, \emph{Multiple recurrence in quasirandom
  groups}, {\ttfamily arXiv:1211.6372}.

\bibitem{BHS87}
Andreas~R. Blass, Jeffry~L. Hirst, and Stephen~G. Simpson, \emph{Logical
  analysis of some theorems of combinatorics and topological dynamics}, Logic
  and combinatorics ({A}rcata, {C}alif., 1985), Contemp. Math., vol.~65, Amer.
  Math. Soc., Providence, RI, 1987, pp.~125--156. \MR{891245}

\bibitem{sF77}
Solomon Feferman, \emph{Theories of finite type related to mathematical
  practice}, Handbook of mathematical logic (Jon Barwise, ed.), North-Holland,
  Amsterdam, 1977, pp.~913--971. Studies in Logic and the Foundations of Math.,
  Vol. 90.

\bibitem{kG58}
Kurt G{\"o}del, \emph{\"{U}ber eine bisher noch nicht ben\"utzte {E}rweiterung
  des finiten {S}tandpunktes}, Dialectica \textbf{12} (1958), 280--287.
  \MR{0102482}

\bibitem{nH74}
Neil Hindman, \emph{Finite sums from sequences within cells of a partition of
  {$N$}}, J. Combinatorial Theory Ser. A \textbf{17} (1974), 1--11.
  \MR{0349574}

\bibitem{HS12}
Neil Hindman and Dona Strauss, \emph{Algebra in the {S}tone-\v {C}ech
  compactification}, de Gruyter Textbook, Walter de Gruyter \& Co., Berlin,
  2012, Theory and applications, Second revised and extended edition.
  \MR{2893605}

\bibitem{jH04}
Jeffry~L. Hirst, \emph{Hindman's theorem, ultrafilters, and reverse
  mathematics}, J. Symbolic Logic \textbf{69} (2004), no.~1, 65--72.
  \MR{2039345}

\bibitem{jH08}
James Hunter, \emph{Higher-order reverse topology}, Ph.D. thesis, University of
  Wisconsin-Madison, 2008,
  \url{http://www.math.wisc.edu/~lempp/theses/hunter.pdf}.

\bibitem{sK59}
Stephen~C. Kleene, \emph{Recursive functionals and quantifiers of finite types.
  {I}}, Trans. Amer. Math. Soc. \textbf{91} (1959), 1--52. \MR{0102480}

\bibitem{uK99}
Ulrich Kohlenbach, \emph{On the no-counterexample interpretation}, J. Symbolic
  Logic \textbf{64} (1999), no.~4, 1491--1511. \MR{1780065}

\bibitem{uK05b}
\bysame, \emph{Higher order reverse mathematics}, Reverse mathematics 2001,
  Lect. Notes Log., vol.~21, Assoc. Symbol. Logic, La Jolla, CA, 2005,
  pp.~281--295. \MR{2185441}

\bibitem{uK08}
\bysame, \emph{Applied proof theory: Proof interpretations and their use in
  mathematics}, Springer Monographs in Mathematics, Springer Verlag, 2008.
  \MR{2445721}

\bibitem{aK12b}
Alexander~P. Kreuzer, \emph{Non-principal ultrafilters, program extraction and
  higher-order reverse mathematics}, J. Math. Log. \textbf{12} (2012), no.~1.

\bibitem{aM11}
Antonio Montalb{\'a}n, \emph{Open questions in reverse mathematics}, Bull.
  Symbolic Logic \textbf{17} (2011), no.~3, 431--454. \MR{2856080}

\bibitem{hTna}
Henry Towsner, \emph{Ultrafilters in reverse mathematics}, {\ttfamily
  arXiv:1109.3902}.

\bibitem{aT73}
Anne~S. Troelstra (ed.), \emph{Metamathematical investigation of intuitionistic
  arithmetic and analysis}, Lecture Notes in Mathematics, Vol. 344,
  Springer-Verlag, Berlin, 1973. \MR{0325352}

\end{thebibliography}

\end{document}